\newtheorem{thm}{Theorem}[section]
\newtheorem{lem}[thm]{Lemma}
\newtheorem{prop}[thm]{Proposition}
\theoremstyle{definition}
\newtheorem{defn}{Definition}[section]
\newtheorem{rem}[thm]{Remark}%[section]
\numberwithin{equation}{section}
\def\C{\mathbb C}
\def\R{\mathbb R}
\def\cal{\mathcal}
\def\pt{\partial}
\def\p{\partial}
\def\S{\Sigma}
\def\<{\left<}
\def\>{\right>}
\def\g{\gamma}
\def\O{\Omega}
\def\ve{\varepsilon}
\def\De{\Delta}
\def\n{\nabla}
\def\l{\lambda}
\def\S{\Sigma}
\def\<{\langle}
\def\>{\rangle}
\def\s{\sigma}
\begin{document}

\title[Escobar's Conjecture]{Escobar's Conjecture on a sharp lower bound for the first nonzero Steklov eigenvalue}
\author{Chao Xia}
\address{School of Mathematical Sciences\\
Xiamen University\\
361005, Xiamen, P.~R. China}
\email{chaoxia@xmu.edu.cn}
\author{Changwei~Xiong}
\address{School of Mathematics, Sichuan University, Chengdu 610065, Sichuan,  P.~R.~China}
%\address{Mathematical Sciences Institute, Australian National University, Canberra, ACT 2601, Australia}
\email{changwei.xiong@scu.edu.cn}
%\date{\today}
\thanks{C.~Xia is supported by  NSFC (Grant no. 11871406, 12271449). C.~Xiong is supported by Australian Laureate Fellowship FL150100126 of the Australian Research Council, National Key R and D Program of China 2021YFA1001800 and NSFC (Grant no. 12171334).}

\subjclass[2010]{{35P15}, {47A75}, {49R05}, {35P20}}
\keywords{Steklov eigenvalue; Laplacian eigenvalue; sharp bound; nonnegative sectional curvature}

\maketitle

\begin{abstract}
%Let $(\Omega^n,g)$ be an $n$-dimensional ($n\geq 2$) smooth compact connected Riemannian manifold with boundary.

 It was conjectured by Escobar [J. Funct. Anal. \textbf{165} (1999), 101--116] that for an $n$-dimensional ($n\geq 3$) smooth compact Riemannian manifold with boundary, which has nonnegative Ricci curvature and boundary principal curvatures bounded below by $c>0$, the first nonzero Steklov eigenvalue is greater than or equal to $c$ with equality holding only on isometrically Euclidean balls with radius $1/c$. In this paper, we confirm this conjecture in the case of nonnegative sectional curvature.  The proof is based on a combination of Qiu--Xia's weighted Reilly-type formula with a special choice of the weight function depending on the distance function to the boundary, as well as a generalized Pohozaev-type identity. %{\color{blue} {\bf TO BE CANCELLED. }Assume that the sectional curvatures of $\Omega$ are nonnegative and the principal curvatures of $\partial \Omega$ are bounded from below by $c>0$. We prove that the first nonzero Steklov eigenvalue $\sigma_1$ of $\Omega$ is no less than $c$, with equality only if $\Omega$ is isometric to a Euclidean ball with radius $1/c$. In particular, when $\Omega\subset \mathbb{R}^n$ with $n\geq 3$, we confirm Escobar's conjecture proposed in 1999. The proof is a combination of the weighted Reilly's formula and a Pohozaev identity. }%We also derive a sharp upper bound of $\sigma_1$ in terms of the first nonzero Laplacian eigenvalue of the boundary $\partial \Omega$.
\end{abstract}

\section{Introduction}

Let $(\Omega^n, g)$ be an $n$-dimensional ($n\geq 2$) smooth compact connected Riemannian manifold with boundary $\S=\pt \O$. We are interested in the Steklov eigenvalue problem on $\O$, introduced by Steklov in 1895 (see \cite{KKK14}, \cite{Ste02}):
\begin{equation}\label{Steklov}
\begin{cases}
\Delta f=0,&\text{ in } \O,\\
\dfrac{\partial f}{\partial \nu}=\sigma f, &\text{ on } \S,
\end{cases}
\end{equation}
where $\Delta$ denotes the Laplace--Beltrami operator of $\O$ and $\nu$ is the outward unit normal along $\S$. Equivalently, the Steklov eigenvalues constitute the spectrum of the Dirichlet-to-Neumann map $\Lambda:C^\infty(\S)\rightarrow C^\infty(\S)$ defined by
\begin{equation*}
\Lambda f=\frac{\pt (\mathcal{H}f)}{\pt \nu},\quad f\in C^\infty(\S),
\end{equation*}
where $\mathcal{H}f$ is the harmonic extension of $f$ to the interior of $\O$. The Dirichlet-to-Neumann map $\Lambda$ is a first-order elliptic pseudo-differential operator \cite[pp. 37--38]{Tay96} and its spectrum is nonnegative, discrete and unbounded (counted with multiplicities):
\begin{equation*}
0=\sigma_0<\sigma_1\leq \sigma_2\leq \cdots \nearrow +\infty.
\end{equation*}
A standard variational principle for the first nonzero Steklov eigenvalue is given by
\begin{equation}\label{var-ch}
\sigma_1=\inf_{f\in C^1(\S),\\ \int_\S f da=0}\frac{\int_\Omega |\nabla (\mathcal{H}f)|^2 dv}{\int_\S f^2 da}.
\end{equation}
We refer to the excellent survey \cite{GP17} for an account of the Steklov eigenvalue problem.

In this paper we are mainly concerned with the sharp lower bound for the first nonzero Steklov eigenvalue $\sigma_1$. %The upper bound Weinstock, Fraser-Schoen and so on...

\subsection{Sharp lower bound of the first nonzero Steklov eigenvalue}\

In 1970, Payne \cite{Pay70} proved that for a convex planar domain $\O\subset\mathbb{R}^2$ whose boundary curve has its geodesic curvature $\ge c>0$, its first nonzero Steklov eigenvalue satisfies $\s_1\ge c$ with equality holding only for a round disk with radius $1/c$. This sharp lower bound for $\s_1$ has been generalized by Escobar \cite{Esc97} to non-negatively curved $2$-dimensional manifolds. Both of Payne's and Escobar's approaches, which are based on the maximum principle, work only for the $2$-dimensional case. In higher dimensions, a non-sharp lower bound $\s_1>c/2$ has been established by Escobar \cite{Esc97} for $n$-dimensional manifolds with nonnegative Ricci curvature and boundary principal curvatures $\ge c$ by using Reilly's formula \cite{Rei77}. Based on the above results, Escobar raised the following conjecture in 1999.

\medskip

\noindent{\bf Escobar's Conjecture \cite{Esc99}.} Let $(\Omega^n,g)$ be an $n$-dimensional ($n\geq 3$) smooth compact connected Riemannian manifold with boundary $\S=\pt \O$. Assume that $${\rm Ric}_g\ge 0, \hbox{ and }h\ge cg_\S>0\hbox{ on }\S.$$ Then $\s_1\ge c$ with equality holding only for a Euclidean ball of radius $ 1/ c$.

\medskip

Here and throughout the paper, we denote by ${\rm Ric}_g$ the Ricci curvature $2$-tensor for $(\O, g)$ and by $h$ the second fundamental form of $\S$. For notational simplicity, we use ${\rm Ric}_g\ge 0$, ${\rm Sect}_g\ge 0$ and $h\ge cg_\S$ to indicate that $(\O, g)$ has nonnegative Ricci curvature, nonnegative sectional curvature and $\S$ has its principal curvatures $\ge c$ respectively.

There has been little progress since Escobar raised this conjecture. Monta\~{n}o \cite{Mon13} showed in 2013 that the conjecture is true for rotationally symmetric metrics  (see \cite{Xio19} for a different proof).  In fact, there is not much difference in techniques between $2$-dimensional general metrics and higher dimensional rotationally symmetric metrics. He also checked in \cite{Mon16} that Escobar's conjecture is true for Euclidean ellipsoids.

In this paper, we confirm Escobar's conjecture for manifolds with nonnegative sectional curvature.
\begin{thm}\label{thm1}
Let $(\Omega^n,g)$ be an $n$-dimensional ($n\geq 2$) smooth compact connected Riemannian manifold with boundary $\S=\pt \O$. Assume that \begin{eqnarray}\label{curv-cond}
{\rm Sect}_g\ge 0, \hbox{  and }h\ge cg_\S>0 \hbox{ on }\S.
\end{eqnarray}
Then the first nonzero Steklov eigenvalue $\sigma_1$ for $\O$ satisfies
\begin{equation*}
\sigma_1\geq c,
\end{equation*}
with equality if and only if $\O$ is isometric to a Euclidean ball with radius $1/c$.
\end{thm}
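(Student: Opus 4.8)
The plan is to test a weighted integral identity against the harmonic extension of a first Steklov eigenfunction, with a weight that is an explicit quadratic function of the distance to the boundary. Fix a first eigenfunction and let $f$ denote its harmonic extension, so that $\De f=0$ in $\O$, $u:=f|_\S$, $\pt_\nu f=\s_1 u$ on $\S$, $\int_\S u\,da=0$, and $\int_\O|\n f|^2\,dv=\s_1\int_\S u^2\,da$. For a weight $V\in C^\infty(\bar\O)$ I would combine two ingredients: Qiu--Xia's weighted Reilly type formula, obtained by multiplying the Bochner identity $\tfrac12\De|\n f|^2=|\n^2 f|^2+\mathrm{Ric}(\n f,\n f)$ (valid since $\De f=0$) by $V$ and integrating by parts; and the generalized Pohozaev identity coming from integrating $\div\big(|\n f|^2\n V-2\la\n V,\n f\ra\n f\big)=(\De V)|\n f|^2-2\n^2 V(\n f,\n f)$ over $\O$. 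Eliminating $\int_\O(\De V)|\n f|^2\,dv$ between the two, and inserting $\pt_\nu f=\s_1 u$, gives the identity
\begin{equation*}
\int_\O\big[\,V|\n^2 f|^2+V\,\mathrm{Ric}(\n f,\n f)-\n^2 V(\n f,\n f)\,\big]\,dv=\int_\S\big[-V_\nu\s_1^2 u^2+2\s_1 V|\n_\S u|^2-H\s_1^2 V u^2-V h(\n_\S u,\n_\S u)\big]\,da,
\end{equation*}
valid for every such $V$, with $V_\nu:=\pt_\nu V$ and $H$ the mean curvature of $\S$.

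The next step is to choose $V=\phi(d)$, where $d=d(\cdot,\S)$, with $\phi(0)=0$. Then $\n_\S V=0$ and $V|_\S=0$ on $\S$, so the right-hand side collapses to $-V_\nu\s_1^2\int_\S u^2\,da=\phi'(0)\,\s_1\int_\O|\n f|^2\,dv$; in particular the hypothesis $h\ge cg_\S$ does not enter through $\S$ directly. It enters instead through the interior term $\n^2 V(\n f,\n f)=\phi''(d)\la\n f,\n d\ra^2+\phi'(d)\,\n^2 d(\n f,\n f)$: a Riccati comparison along the geodesics normal to $\S$, using $\mathrm{Sect}_g\ge 0$ with the initial condition $h\ge cg_\S$, yields the \emph{sharp} Hessian bound
\begin{equation*}
\n^2 d\le-\frac{c}{1-cd}\,\big(g-\n d\otimes\n d\big)\qquad\text{wherever }d\text{ is smooth},
\end{equation*}
with $d<1/c$ because the inradius is at most $1/c$. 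Choosing $\phi(d)=d-\tfrac{\s_1}{2}d^2$ — so that $\phi'(0)=1$, $\phi''\equiv-\s_1$, and $\phi\ge 0$, $\phi'\ge 0$ on $[0,1/c]$ as soon as $\s_1\le c$ — the normal and tangential pieces telescope: $-\n^2 V(\n f,\n f)-\s_1|\n f|^2$ is bounded below by $\tfrac{c-\s_1}{1-cd}\big(|\n f|^2-\la\n f,\n d\ra^2\big)$, and using $\mathrm{Ric}_g\ge 0$ together with $V\ge 0$ the identity turns into
\begin{equation*}
\int_\O\Big[\,V|\n^2 f|^2+V\,\mathrm{Ric}(\n f,\n f)+\frac{c-\s_1}{1-cd}\,\big(|\n f|^2-\la\n f,\n d\ra^2\big)\,\Big]\,dv\le 0 .
\end{equation*}
If $\s_1<c$ every integrand is nonnegative, hence all vanish; in particular $\n f$ is everywhere proportional to $\n d$, so $f$ is a function of $d$ alone, and then $\De f=0$ with $f|_\S$ constant and $\int_\S u=0$ forces $f\equiv 0$, a contradiction. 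Therefore $\s_1\ge c$.

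For the rigidity statement, equality $\s_1=c$ forces equality throughout, namely $\n^2 f\equiv 0$ and $\mathrm{Ric}(\n f,\n f)\equiv 0$ on $\O$, together with equality in the Hessian comparison along the plane spanned by $\n d$ and the tangential component of $\n f$. From $\n^2 f\equiv 0$ one gets that $\n f$ is parallel, $|\n f|$ is constant, $\la\n f,\n d\ra$ is constant along each normal geodesic, and $f=u(p)\,(1-cd)$ along the normal geodesic issuing from $p\in\S$; hence the normal exponential map carries $\S\times[0,1/c)$ onto $\O$ with the slice $\{d=1/c\}$ collapsed to a single point, and the pulled-back metric is the warped product $dt^2+(1-ct)^2 g_\S$. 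Smooth closing-up at the center forces $(\S,g_\S)$ to be the round sphere of radius $1/c$ — equivalently $h=cg_\S$ and $\mathrm{Sect}_g\equiv 0$ — so that $\O$ is isometric to the Euclidean ball of radius $1/c$.

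I expect two genuine difficulties. The first is the cut locus of $d$: the weight $\phi(d)$ is only Lipschitz there, so the weighted Reilly formula and the Pohozaev identity must be run on $\O\setminus\mathrm{Cut}(\S)$, using that $\mathrm{Cut}(\S)$ has measure zero and that $\phi(d)$ is semiconcave, so that both the distributional Hessian inequality and the cut-locus contribution to the divergence theorem have the favorable sign; this is routine in Heintze--Karcher type arguments but requires care here, as does the equality case, where one must upgrade the partial (one-direction) equality in the Hessian comparison to the full warped-product structure. The second, and more essential, is the choice of weight: the entire argument hinges on the fact that $\phi(d)=d-\tfrac{\s_1}{2}d^2$, \emph{together with the sharp comparison} $\n^2 d\le-\tfrac{c}{1-cd}(g-\n d\otimes\n d)$ rather than the cruder $\n^2 d\le-c\,(g-\n d\otimes\n d)$, makes the interior remainder collapse to a nonnegative multiple of $\tfrac{c-\s_1}{1-cd}$ times the tangential energy of $\n f$; with the cruder bound one recovers only Escobar's $\s_1>c/2$, so it is precisely the full strength of $\mathrm{Sect}_g\ge 0$ (not merely $\mathrm{Ric}_g\ge 0$) that drives the proof, and checking this telescoping and the sharp comparison is the technical core.
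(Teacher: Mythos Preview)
Your inequality argument is essentially the paper's: combine the Qiu--Xia weighted Reilly formula with the Pohozaev identity for $X=\nabla V$, choosing $V=\phi(d)$ quadratic in the boundary distance, and use the sharp Riccati comparison $\nabla^2 d\le -\tfrac{c}{1-cd}(g-\nabla d\otimes\nabla d)$; the cut-locus issue is handled by Greene--Wu smoothing, as you anticipate. One simplification worth noting: the paper takes $\phi(d)=d-\tfrac{c}{2}d^2$ (with $c$, not $\sigma_1$), which gives the clean pointwise bound $\nabla^2 V\le -cg$ and yields directly $\int_\Sigma(\partial_\nu f)^2\ge c\int_\Omega|\nabla f|^2$, hence $\sigma_1\ge c$ without any contradiction argument or a priori assumption $\sigma_1\le c$.

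Your rigidity sketch, however, has a genuine gap. When $\sigma_1=c$ your remainder $\tfrac{c-\sigma_1}{1-cd}(\cdot)$ vanishes \emph{identically}, so you do \emph{not} get equality in the Hessian comparison for $d$ in any direction; the only information surviving is $\nabla^2 f\equiv 0$ and $\mathrm{Ric}(\nabla f,\nabla f)\equiv 0$. From this alone the claims that the normal exponential map is a diffeomorphism from $\Sigma\times[0,1/c)$ onto $\Omega$ minus a point, and that the pulled-back metric is the warped product $dt^2+(1-ct)^2 g_\Sigma$, are not justified --- both would require equality in the Riccati comparison in \emph{all} tangential directions, which you have not established. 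The paper's route is different: $\nabla^2 f=0$ gives a parallel vector field $\nabla f$, hence (Brinkmann) a local Riemannian product $M_0\times\mathbb{R}$ along the level sets of $f$ (not of $d$); the boundary is then a bigraph over the totally geodesic slice $M_0=\{f=0\}$, the graph equation $\partial_\nu f=cf$ forces a specific eikonal relation on $M_0$, and one concludes via a sharp inradius comparison (Li) that $M_0$ is a Euclidean ball of radius $1/c$. This step is the substantive content of the equality case and is not captured by your sketch.
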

%\begin{rem}
%Theorem~\ref{thm1} for $n=2$ was proved by Payne \cite{Pay70} in 1970 for Euclidean domains and by Escobar \cite{Esc97} in 1997 for Riemannian manifolds; our argument yields a new proof for this case. Theorem~\ref{thm1} for $n\geq 3$ was conjectured by Escobar \cite{Esc99} in 1999 under a weaker condition that the Ricci curvature $\mathrm{Ric}_g$ of $\O$ is nonnegative. In particular, when $\O$ is a Euclidean domain, we confirm Escobar's conjecture.
%\end{rem}
%\begin{rem}
%There are other partial results regarding Escobar's conjecture. Escobar \cite{Esc97} obtained the nonsharp lower bound $\sigma_1>c/2$ in 1997 by use of the Reilly's formula. Monta\~{n}o \cite{Mon13} confirmed the conjecture in 2013 for warped product manifolds (see \cite{Xio19} for a different proof).
%\end{rem}

A special case is when $\O$ is a bounded domain in $\mathbb{R}^n$. We list it below separately because of its significance.
\begin{thm}\label{cor1}
Let $\Omega\subset\mathbb{R}^n$ ($n\geq 2$) be a smooth bounded domain in $\mathbb{R}^n$. Assume that the principal curvatures of $\S=\p\O$ are bounded below by $c>0$. Then the first nonzero Steklov eigenvalue $\sigma_1$ for $\O$ satisfies
\begin{equation*}
\sigma_1\geq c,
\end{equation*}
with equality if and only if $\O$ is a Euclidean ball with radius $1/c$.
\end{thm}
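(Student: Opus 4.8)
The plan is to follow the strategy announced in the abstract: combine a weighted Reilly type formula due to Qiu--Xia with a carefully chosen weight built from the distance to the boundary, together with a Pohozaev type identity. Let $f$ be a first Steklov eigenfunction, so $\Delta f=0$ in $\Omega$ and $\partial_\nu f=\sigma_1 f$ on $\Sigma$, normalized by $\int_\Sigma f\,da=0$. Let $d(x)=\mathrm{dist}(x,\Sigma)$ and let $V=\phi(d)$ for a function $\phi$ to be chosen, with $\phi(0)=1$ and $\phi'<0$; the natural candidate, guided by the Euclidean ball $B_{1/c}$ (where $d=1/c-|x|$ and the extremal eigenfunction is linear), is the solution of an ODE making the Euclidean ball the equality case, e.g.\ $\phi$ related to $(1-cd)$ or $\cos$-type depending on a curvature normalization. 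First I would record the Qiu--Xia weighted Reilly formula: for the weight $V$ one has an identity of the shape
\begin{equation*}
\int_\Omega V\big(|\nabla^2 f|^2-(\Delta f)^2\big)\,dv=\int_\Sigma V\Big(2 u\,\Delta_\Sigma z+h(\nabla_\Sigma z,\nabla_\Sigma z)+ h_{V}\, u^2\Big)\,da+\int_\Omega \big(\Delta V\, \text{terms}+\nabla^2 V\text{-Ric terms}\big)\,dv,
\end{equation*}
where $z=f|_\Sigma$, $u=\partial_\nu f$, and the interior bulk term carries $\nabla^2 V-\frac{\Delta V}{?}$ paired against the Bochner curvature term $\mathrm{Ric}(\nabla f,\nabla f)$; since $\Delta f=0$ the left side is $\int_\Omega V|\nabla^2 f|^2\,dv\ge 0$, and the sectional curvature hypothesis will be used to control the Hessian-of-$V$ interior term after inserting the explicit $\phi(d)$, using $\nabla^2 d\le 0$ in the sense guaranteed by $\mathrm{Sect}_g\ge 0$ (a Hessian comparison) and $h\ge cg_\Sigma$ on $\Sigma$.

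Next I would process the boundary integral. Using $\partial_\nu f=\sigma_1 f$ and integrating $u\,\Delta_\Sigma z$ by parts on $\Sigma$ gives $-\sigma_1\int_\Sigma|\nabla_\Sigma z|^2$, while $\int_\Sigma h(\nabla_\Sigma z,\nabla_\Sigma z)\ge c\int_\Sigma|\nabla_\Sigma z|^2$ and $\int_\Sigma h_{V}u^2$ contributes a multiple of $\sigma_1^2\int_\Sigma z^2$. Combining with the variational characterization $\sigma_1\int_\Sigma z^2=\int_\Omega|\nabla f|^2$, one wants the inequality to collapse, after the choice of $\phi$, to something like $(\sigma_1-c)\cdot(\text{nonnegative quantity})\ge 0$. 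The role of the Pohozaev type identity (obtained by testing the equation against $V'\,\partial_\nu f$ or against $\langle\nabla V,\nabla f\rangle$, i.e.\ a Rellich--Necas argument) is to convert $\int_\Omega|\nabla f|^2$ and boundary $\int_\Sigma|\nabla_\Sigma z|^2$ integrals into each other with the correct constants, matching precisely what the weighted Reilly formula produces; this is the step where the specific ODE satisfied by $\phi$ is forced.

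I expect the main obstacle to be the \emph{choice of the weight} $\phi(d)$ and verifying that all the error terms generated by $d$ not being smooth everywhere (the cut locus of $\Sigma$) and by the curvature comparisons have the right sign. Concretely: (i) the distance function $d$ is only Lipschitz past the focal/cut locus, so the Reilly formula must be justified either by working on $\{d<t\}$ and letting the boundary-regularity region shrink, or by a standard argument that the singular set has measure zero and $\nabla^2 d$ is a nonpositive measure there, hence harmless for the inequality; (ii) one must check $\phi$ can be chosen so that simultaneously $\phi>0$ on $\Omega$ (so $V$ is an admissible positive weight up to where needed), $\phi'<0$, and the combined coefficient of $\mathrm{Ric}(\nabla f,\nabla f)$ stays nonnegative — here $\mathrm{Ric}\ge 0$ suffices but $\mathrm{Sect}_g\ge 0$ is what delivers the Hessian bound $\nabla^2 d\le \text{(zero, in the nonnegative curvature model)}$ needed to control $\nabla^2 V$. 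Finally, for the rigidity statement, equality forces $\nabla^2 f\equiv 0$ in $\Omega$ (so $f$ is an affine function, $|\nabla f|$ constant), $h\equiv c g_\Sigma$ on $\Sigma$ (umbilic with constant principal curvature $c$), and $\mathrm{Sect}_g\equiv 0$ along $\nabla f$; one then argues, as in the classical Reilly rigidity for Euclidean balls, that $\Omega$ must be flat and its boundary a round sphere of radius $1/c$, hence $\Omega=B_{1/c}$. Theorem~\ref{cor1} is the special case $g$ flat, for which the weight and all comparisons become explicit.
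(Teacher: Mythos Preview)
Your overall strategy matches the paper's---weighted Reilly formula of Qiu--Xia, Pohozaev identity with $X=\nabla V$, and Hessian comparison for the boundary-distance function---but your choice of weight is wrong in exactly the place that matters, and with your normalization the argument will not close.

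You take $V=\phi(d)$ with $\phi(0)=1$. With a weight that is nonzero on $\Sigma$, the full boundary block
\[
\int_\Sigma V\bigl[2\,\partial_\nu f\,\Delta_\Sigma f+H(\partial_\nu f)^2+h(\nabla_\Sigma f,\nabla_\Sigma f)\bigr]\,da
\]
survives. After substituting $\partial_\nu f=\sigma_1 f$ and integrating by parts, the $2\,\partial_\nu f\,\Delta_\Sigma f$ term produces $-2\sigma_1\int_\Sigma|\nabla_\Sigma f|^2$, while $h(\nabla_\Sigma f,\nabla_\Sigma f)\ge c|\nabla_\Sigma f|^2$ only contributes $+c\int_\Sigma|\nabla_\Sigma f|^2$. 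The net coefficient $(c-2\sigma_1)$ on $\int_\Sigma|\nabla_\Sigma f|^2$ has the wrong sign once $\sigma_1>c/2$; this is precisely the obstacle that pinned Escobar at $\sigma_1>c/2$ for twenty years. No Pohozaev correction with the same weight will repair this, because the Pohozaev boundary terms scale the same way.

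The paper's key observation is to take instead
\[
V=\rho-\tfrac{c}{2}\rho^2,
\]
so that $V|_\Sigma=0$ and $\partial_\nu V|_\Sigma=-1$. The vanishing of $V$ on $\Sigma$ kills \emph{all} the problematic boundary terms above; the only surviving boundary contribution in the Reilly formula is $\int_\Sigma\partial_\nu V\,|\nabla_\Sigma f|^2=-\int_\Sigma|\nabla_\Sigma f|^2$. The Pohozaev identity with $X=\nabla V$ (again using $V|_\Sigma=0$) then reads $\int_\Sigma|\nabla_\Sigma f|^2-\int_\Sigma(\partial_\nu f)^2=\int_\Omega(2\nabla^2V-\Delta V\,g)(\nabla f,\nabla f)$, and adding the two identities yields the clean inequality
\[
\int_\Sigma(\partial_\nu f)^2\,da=\int_\Omega\bigl(-\nabla^2V(\nabla f,\nabla f)+V|\nabla^2 f|^2+V\,\mathrm{Ric}(\nabla f,\nabla f)\bigr)\,dv\ge c\int_\Omega|\nabla f|^2\,dv,
\]
since the Hessian comparison for this particular $V$ gives $-\nabla^2V\ge c\,g$ (note $V'(\rho)=1-c\rho$, $V''=-c$, so $\nabla^2V=-c\,\partial_\rho\otimes\partial_\rho+(1-c\rho)\nabla^2\rho\le -c\,g$). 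From $\int_\Sigma(\partial_\nu f)^2=\sigma_1^2\int_\Sigma f^2$ and $\int_\Omega|\nabla f|^2=\sigma_1\int_\Sigma f^2$ one reads off $\sigma_1\ge c$. Your guess ``$\phi$ related to $(1-cd)$'' is tantalizingly close: $1-cd$ is $V'$, not $V$; you want its antiderivative vanishing at $d=0$.

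Your handling of the cut locus (measure zero, work on sublevel sets or smooth) and of rigidity ($\nabla^2 f\equiv0$ forces $f$ affine, then the Robin condition $\partial_\nu f=cf$ pins $\Sigma$ as a sphere) are fine in spirit; in the Euclidean case both are indeed routine.
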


In addition, in view of the variational characterization \eqref{var-ch} for $\sigma_1$, our result is equivalent to a sharp Poincar\'{e}-trace inequality. It is also worth mentioning that our result can be viewed as a sharp lower bound of the fundamental gap for the Steklov eigenvalue problem (noting $\sigma_1-\sigma_0=\sigma_1$); for the sharp lower bounds on the fundamental gaps of the Dirichlet and the Neumann eigenvalue problems, we refer to \cite{AC11,Kro92,PW60,ZY84}.

We use a method which is totally based on integral identities and inequalities to prove Theorem \ref{thm1}. In particular, the proof has two main ingredients. One is a weighted Reilly-type formula proved by Qiu and the first-named author \cite{QX15}, with a special choice of the weight function
\begin{equation}\label{special-weight}
V=\rho-\frac c2\rho^2,
\end{equation}
where $\rho=\mathrm{dist}(\cdot,\S)$ is the distance function  to $\S$. The other is a generalized Pohozaev-type identity which was proved by Provenzano--Stubbe \cite{PS19} for Euclidean domains and by the second-named author \cite{Xio18} for general manifolds, with a special choice of the gradient vector field $\n V$ in the identity. Such a Pohozaev-type identity has been recently used to obtain bounds on Steklov eigenvalues; see e.g. \cite{CGH20} and \cite{GKLP22}. Remarkably, in spite of the use of the weighted Reilly-type formula, after using the Hessian comparison theorem, we arrive at two key inequalities \eqref{key-ineq1} and \eqref{key-ineq2}, establishing the relations among the interior Dirichlet integral, the boundary Dirichlet integral and the boundary $L^2$ norm of the normal derivative for a harmonic function $f$, which does not involve the weight function $V$.
Our argument works for all dimensions. Hence it also provides a new proof for the $2$-dimensional case.

For our purpose, the crucial property of $V$ is the following Hessian comparison result. The curvature condition \eqref{curv-cond} implies that
\begin{eqnarray}\label{hess-comp}
\nabla^2 V\le -cg
\end{eqnarray} holds true away from ${\rm Cut}(\S)$, the cut locus of $\S$ in $\O$. This follows directly from the Hessian comparison theorem for $\rho=\mathrm{dist}(\cdot,\S)$ implicitly given by Heintze-Karcher \cite[Section 3.2]{HK78}; see also Kasue \cite[Remark 2.26]{Kas82}. Moreover, Kasue \cite{Kas82} proved that \eqref{hess-comp} holds true throughout $\O$ in the weak sense of Wu \cite{Wu79}.
Since the weight function $V$  is only Lipschitz continuous on  ${\rm Cut}(\S)$, in order to apply Qiu--Xia's weighted Reilly-type formula and the generalized Pohozaev-type identity,  we have to make a smooth  approximation $V_\varepsilon\in C^\infty(\O)$ of $V$. Fortunately, we are able to choose a Greene--Wu type smooth approximation $V_\varepsilon$ of $V$ which is identical to $V$ near $\S$ and satisfies $\nabla^2 V_\varepsilon\le -(c-\varepsilon) g$ for any small $\varepsilon>0$. This is the main technical part in the proof.

%{\color{blue} {\bf To be cancelled.} The original Reilly's formula \cite{Rei77} was obtained in 1977  and since then has presented many important applications. The generalized Pohozaev-type identity we use in this paper was first derived by Provenzano and Stubbe in \cite{PS19} for Euclidean domains, and it was used in \cite{PS19} and \cite{Xio18} to prove comparison results between the Steklov spectrum on a Riemannian manifold and the Laplacian spectrum on the boundary of the manifold. }

%Our proof also involves a careful choice of the weight function in the weighted Reilly's formula in terms of the distance function $\rho(x)=\mathrm{dist}(x,\S)$ to the boundary.

\subsection{Relation for the spectra of two eigenvalue problems}\

As a byproduct of our argument, we are able to provide some new results on the comparison between the spectrum of the Steklov eigenvalue problem on $(\O,g)$ and that of the Laplacian eigenvalue problem on its boundary $\S$.

Let $\Delta_\S$ denote the Laplace--Beltrami operator acting on smooth functions on $\S$.
 The  spectrum of $\S$ (for $\Delta_\S$) % on $\S$ is
%\begin{equation}\label{Laplacian}
%-\Delta_\S f= \lambda f, \text{ on }\Sigma,
%\end{equation}
%and it admits
consists of an increasing discrete sequence of nonnegative eigenvalues (counted with multiplicities)
\begin{equation*}
0=\lambda_0<\lambda_1\leq \lambda_2\leq \cdots \nearrow +\infty.
\end{equation*}
There are various types of comparison between the Steklov eigenvalue $\sigma_j$ and the Laplacian eigenvalue $\lambda_j$. See e.g. \cite{WX09, Kar17,PS19,Xio18,Esc99}  and the references therein. Among them the most relevant works to our result here are Q.~Wang and C.~Xia's \cite{WX09} and M.~Karpukhin's \cite{Kar17}.

Q.~Wang and C.~Xia \cite{WX09} proved that for Riemannian manifolds of dimension $n\geq 2$ with $\mathrm{Ric}_g\geq 0$ and boundary principal curvatures $\ge c$, there holds
\begin{equation}\label{wx}
\sigma_1\leq \frac{\sqrt{\lambda_1}}{(n-1)c}(\sqrt{\lambda_1}+\sqrt{\lambda_1-(n-1)c^2})
\end{equation}
with equality holding only for Euclidean balls with radius $1/c$. Recently, based on the previous results of Raulot--Savo \cite{RS12} and Yang--Yu \cite{YY17} on estimates of the Steklov eigenvalue for differential forms, M.~Karpukhin \cite{Kar17} showed  that for Riemannian manifolds of dimension $n\geq 3$ with nonnegative second Weitzenb\"{o}ck curvature $W^{[2]}$  and boundary $(n-2)$-curvature $\ge (n-2)c$, there holds for $j\geq 1$,
\begin{align}
\sigma_j&\leq \frac{\lambda_j}{(n-1)c}, \text{ when } n\geq 4;\label{Kar1}\\
& \sigma_j< \frac{2\lambda_j}{3c},\text{ when } n=3.\label{Kar2}
\end{align}
See \cite{Kar17} for the precise definitions of the Weitzenb\"{o}ck curvature and the boundary $(n-2)$-curvature. %Here for the purpose of comparison with our result below, we merely mention that for $n\geq 3$, the condition $\mathrm{Sect}_g\geq 0$ implies $W^{[2]}\geq 0$, and they both imply $\mathrm{Ric}_g\geq 0$; meanwhile, the boundary principal curvatures $\geq c$ implies the boundary $(n-2)$-curvatures $\geq (n-2)c$, and they both imply the boundary mean curvature $\geq (n-1)c$. Also note that by the results in \cite{Ich81,Kas83}, any compact Riemannian manifold with nonnegative Ricci curvature and strictly mean convex boundary must have only one boundary component. So we can apply it to \cite{WX09,Kar17} and the present paper to conclude that the boundary $\S$ in question is connected.

In this paper we add new results of the same type for Riemannian manifolds of nonnegative sectional curvature and strictly convex boundary. Precisely, we prove Theorem~\ref{thm2} below.
\begin{thm}\label{thm2}
Let $(\Omega^n,g)$ be as in Theorem \ref{thm1}. %an $n$-dimensional ($n\geq 2$) smooth compact connected Riemannian manifold with boundary $\S=\pt \O$. Assume that the sectional curvatures of $\Omega$ are nonnegative and the principal curvatures of $\partial \Omega$ are bounded from below by $c>0$.
Then the first nonzero Steklov eigenvalue $\sigma_1$ for $\O$ and the first nonzero eigenvalue $\lambda_1$ for $\S$ %the Laplacian eigenvalue problem \eqref{Laplacian}
satisfy
\begin{equation}\label{upper}
\sigma_1\leq \frac{\lambda_1}{(n-1)c},
\end{equation}
with equality if and only if $\O$ is isometric to a Euclidean ball with radius $1/c$. Moreover, the $j$th Steklov eigenvalue $\sigma_j$ for $\O$ and the $j$th eigenvalue $\lambda_j$ for $\S$ satisfy
\begin{equation}\label{high}
\sigma_j\leq \frac{\lambda_j}{(n-1)c},\quad j\geq 2.
\end{equation}
For $2\le j\le n$, the equality in \eqref{high} is achieved by Euclidean balls with radius $1/c$.
\end{thm}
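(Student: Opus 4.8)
The plan is to derive Theorem~\ref{thm2} from the key inequality \eqref{key-ineq1} produced in the proof of Theorem~\ref{thm1}, together with the variational characterisations of the Steklov and Laplacian spectra. I recall \eqref{key-ineq1} as the weight-free estimate
\[
(n-1)c\int_\Omega|\nabla u|^2\,dv\ \le\ \int_\Sigma|\nabla_\Sigma f|^2\,da,\qquad u:=\mathcal{H}f,
\]
valid for every $f\in C^\infty(\Sigma)$. It comes from feeding the harmonic extension $u$ into the combination of the generalized Pohozaev identity for $\nabla V$ with Qiu--Xia's weighted Reilly formula (which collapses to a weighted Bochner identity since $\Delta u=0$), using $V=0$, $\partial_\nu V=-1$ on $\Sigma$, and then invoking the pointwise fact that $(-\operatorname{tr}W)|\xi|^2+W(\xi,\xi)\ge(n-1)c|\xi|^2$ for any symmetric $W\le -cg$, applied with $W=\nabla^2 V$ (legitimate by \eqref{hess-comp} after the Greene--Wu smoothing of $V$ across $\operatorname{Cut}(\Sigma)$ and $\varepsilon\to 0$). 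The derivation also discards a nonnegative remainder $\int_\Omega V\big(|\nabla^2 u|^2+\mathrm{Ric}_g(\nabla u,\nabla u)\big)\,dv$, which I will need in the equality discussion; note $V>0$ on the interior since the inradius is $<1/c$.

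For the inequalities \eqref{upper} and \eqref{high}, I would fix $L^2(\Sigma)$-orthonormal Laplacian eigenfunctions $u_0\equiv\text{const},u_1,u_2,\dots$ with $\Delta_\Sigma u_i=-\lambda_i u_i$. For \eqref{upper}, $u_1$ satisfies $\int_\Sigma u_1\,da=0$, so \eqref{var-ch} gives $\sigma_1\le\int_\Omega|\nabla\mathcal{H}u_1|^2\,dv$, and \eqref{key-ineq1} bounds the right side by $\frac{1}{(n-1)c}\int_\Sigma|\nabla_\Sigma u_1|^2\,da=\frac{\lambda_1}{(n-1)c}$. For \eqref{high} with $j\ge 2$ I would use the min-max $\sigma_j=\min_{\dim E=j+1}\ \max_{0\ne f\in E}\ \int_\Omega|\nabla\mathcal{H}f|^2\,dv/\int_\Sigma f^2\,da$ with the test space $E=\operatorname{span}\{u_0,\dots,u_j\}$: for $f=\sum_{i=0}^j a_i u_i$, linearity of $\mathcal{H}$ (with $\nabla\mathcal{H}u_0=0$) and \eqref{key-ineq1} yield
\[
\int_\Omega|\nabla\mathcal{H}f|^2\,dv\le\frac{1}{(n-1)c}\sum_{i=1}^j a_i^2\lambda_i\le\frac{\lambda_j}{(n-1)c}\sum_{i=0}^j a_i^2=\frac{\lambda_j}{(n-1)c}\int_\Sigma f^2\,da,
\]
so $\max_{f\in E}R(f)\le\frac{\lambda_j}{(n-1)c}$ and hence $\sigma_j\le\frac{\lambda_j}{(n-1)c}$. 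The point to stress is that \eqref{key-ineq1} holds for \emph{all} boundary functions, not only eigenfunctions, which is what makes this min-max step work.

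For the equality cases I would argue as follows. On the Euclidean ball $B_{1/c}$ the restrictions of the coordinate functions are simultaneously first Steklov and first Laplacian boundary eigenfunctions, giving $\sigma_j=c=\frac{\lambda_j}{(n-1)c}$ for all $1\le j\le n$; this proves both the ``if'' direction in \eqref{upper} and the final assertion of the theorem. Conversely, if $\sigma_1=\frac{\lambda_1}{(n-1)c}$ then the chain $\sigma_1\le\int_\Omega|\nabla\mathcal{H}u_1|^2\,dv\le\frac{\lambda_1}{(n-1)c}$ consists of equalities: $u_1$ realises the infimum in \eqref{var-ch}, so $u:=\mathcal{H}u_1$ solves \eqref{Steklov} with $\sigma=\sigma_1$, and equality holds in \eqref{key-ineq1}. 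Equality forces the discarded remainder to vanish, hence $\nabla^2 u\equiv 0$ and $\mathrm{Ric}_g(\nabla u,\nabla u)\equiv 0$ on $\Omega$, and it forces the pointwise Hessian-comparison step to be an equality; since $\nabla^2 u\equiv 0$ makes $u$ affine with $|\nabla u|$ a positive constant, this propagates to $\nabla^2 V\equiv -cg$ on all of $\Omega$. Then $W:=\tfrac1{2c}-V\ge 0$ satisfies $\nabla^2 W\equiv cg$, vanishes at the unique interior critical point $p_0$ of $V$, and equals $\tfrac1{2c}$ on $\Sigma$; this identifies $W=\tfrac c2\,d(p_0,\cdot)^2$, shows $\Sigma$ is the geodesic sphere of radius $1/c$ about $p_0$, and via $\nabla^2\big(\tfrac12 d(p_0,\cdot)^2\big)\equiv g$ — an Obata/Tashiro-type rigidity — forces $(\Omega,g)$ to be isometric to the Euclidean ball of radius $1/c$.

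The arithmetic of the second paragraph is entirely routine once \eqref{key-ineq1} is available, so the substance lies in (i) establishing \eqref{key-ineq1} with the correct handling of $\operatorname{Cut}(\Sigma)$, which is inherited verbatim from Theorem~\ref{thm1}, and (ii) the rigidity for \eqref{upper}, whose delicate point is to upgrade ``equality in the limit $\varepsilon\to 0$'' to the genuine pointwise identity $\nabla^2 V\equiv -cg$ throughout $\Omega$ (in particular ruling out any contribution of $\operatorname{Cut}(\Sigma)$) before the Obata-type argument can be applied. I expect (ii) to be the main obstacle.
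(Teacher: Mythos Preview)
Your argument for the inequalities \eqref{upper} and \eqref{high} is essentially the paper's: use boundary Laplacian eigenfunctions as test functions in the Steklov variational/min--max characterisations and bound the Dirichlet energy of their harmonic extensions via the estimate $\int_\Sigma |\nabla_\Sigma f|^2\,da\ge (n-1)c\int_\Omega|\nabla(\mathcal{H}f)|^2\,dv$. Two minor inaccuracies: this estimate is the paper's \eqref{key-ineq2}, not \eqref{key-ineq1}; and it follows from the weighted Reilly formula \eqref{eq-Reilly1} alone --- the Pohozaev identity is not needed here, only for \eqref{key-ineq1}.

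The genuine gap is in your rigidity analysis for \eqref{upper}. From equality you correctly extract $\nabla^2 u\equiv 0$ and $\mathrm{Ric}_g(\nabla u,\nabla u)\equiv 0$. But the further claim that equality in the pointwise inequality $(-\operatorname{tr}W)|\xi|^2+W(\xi,\xi)\ge(n-1)c|\xi|^2$ (with $W=\nabla^2 V$, $\xi=\nabla u$) forces $\nabla^2V\equiv -cg$ is not justified: writing $\nabla^2V=-cg+A$ with $A\le 0$, equality for the \emph{single} direction $\xi=\nabla u$ only gives $A(\xi,\xi)=|\xi|^2\operatorname{tr}A$, which forces $A$ to vanish on $\xi^\perp$ but permits an arbitrary nonpositive component along $\xi\otimes\xi$. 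Moreover, even granting $\nabla^2V\equiv -cg$, your assertion that $W=\tfrac1{2c}-V$ vanishes at the critical point $p_0$ presupposes $\rho(p_0)=1/c$, i.e.\ that the inradius equals $1/c$, which is not known a priori. So the Obata/Tashiro route via $V$ does not close as written, and the obstacle is substantive, not merely the cut--locus regularity issue you flagged.

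The paper sidesteps this entirely by working with $u$ rather than $V$: from $\nabla^2u=0$, the tangential trace on $\Sigma$ reads $0=\Delta_\Sigma u+H\,\partial_\nu u=(-\lambda_1+H\sigma_1)u$, hence $H\equiv\lambda_1/\sigma_1=(n-1)c$, which together with $h\ge cg_\Sigma$ forces $h=cg_\Sigma$; then $0=\nabla^2u(e_i,\nu)=\sigma_1 e_i(u)-h_{ij}e_j(u)=(\sigma_1-c)\,e_i(u)$ gives $\sigma_1=c$, and Proposition~\ref{lem-rigidity1} finishes. I would recommend this direct boundary computation over attempting to pin down $\nabla^2V$.
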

\begin{rem}\label{rem-connect}
By the results in \cite{Ich81,Kas83}, any compact Riemannian manifold with nonnegative Ricci curvature and strictly mean convex boundary must have only one boundary component. So the boundary $\S$ of the Riemannian manifold $\O$ in Theorem~\ref{thm2} is connected, which shows that $0$ is an eigenvalue for $\Delta_\S$ of multiplicity one.
\end{rem}

\begin{rem}
Let us compare Theorem \ref{thm2} with Wang--Xia's \eqref{wx} and Karpukhin's \eqref{Kar1} and \eqref{Kar2}. First, compared with Wang--Xia's \eqref{wx}, our estimate~\eqref{upper} is better; however, note that our assumption $\mathrm{Sect}_g\geq 0$ is stronger than theirs, $\mathrm{Ric}_g\geq 0$. Second, for $n=3$, due to the duality induced by the Hodge $*$-operator, $W^{[2]}\geq 0$ is equivalent to $W^{[1]}=\mathrm{Ric}_g\geq 0$. So in this case our assumption $\mathrm{Sect}_g\geq 0$ is stronger than Karpukhin's $W^{[2]}\geq 0$ (the boundary assumptions are the same), while our estimates \eqref{upper} and \eqref{high} are better than his \eqref{Kar2}. For $n\geq 4$, our \eqref{upper} and \eqref{high} are the same as Karpukhin's \eqref{Kar1}. Nevertheless, in this case, to the best of our knowledge there is no direct relation between $\mathrm{Sect}_g\geq 0$ and $W^{[2]}\geq 0$. For instance, when $n=4$, the condition $W^{[2]}\geq 0$ is equivalent to the isotropic curvature being nonnegative. % (see e.g. Proposition~3.3 in \cite{Nor94} and also \cite{Nor94} for the definition of the isotropic curvature).
(The concept of isotropic curvature was introduced by Micallef--Moore \cite{MM88} and the relation between nonnegative $W^{[2]}$ and nonnegative isotropic curvature was investigated by Micallef--Wang \cite{MW93} and others; see e.g. Thm.~2.1~(a) in \cite{MW93}, Chap.~9 in \cite{Pet16}, or Prop.~3.3 in \cite{Nor94}.) On the other hand, the conditions of nonnegative sectional curvature and nonnegative isotropic curvature are not mutually inclusive. It is well-known that  the Fubini--Study metric on the complex projective space $\C \mathbb{P}^2$ has sectional curvature lying in the interval $[1,4]$ and has nonnegative isotropic curvature but not positive isotropic curvature; see e.g. \cite{MW93}. Therefore, a small perturbation of  the Fubini--Study metric on $\C \mathbb{P}^2$ yields an example which satisfies $\mathrm{Sect}_g\geq 0$ but admits negative isotropic curvature somewhere.
\end{rem}

%We compare Theorem \ref{thm2} with Wang--Xia's \eqref{wx} and Karpukhin's \eqref{Kar2}. %When $n=2$,  the estimate \eqref{upper} is stronger than Wang-Xia's \eqref{wx} and {\color{red} both \eqref{upper} and \eqref{high} are new}.
%When $n=3$, estimates \eqref{upper} and \eqref{high} are better than both Wang--Xia's \eqref{wx} and Karpukhin's \eqref{Kar2}, but with a stronger curvature condition (noting that  $W^{[2]}\ge 0$ is equivalent to $\mathrm{Ric}_g\geq 0$ when $n=3$). When $n\geq  4$, estimates \eqref{upper} and \eqref{high} are the same with Karpukhin's \eqref{Kar2} but again with a stronger curvature condition. Nevertheless, our argument provides a different method to these comparison inequalities.

%{\color{blue}Remark to be deleted: Here the case $n=2$ is special. Maybe in this case, $\lambda_j$ are given by $(2k\pi)^2/L(\pt \O)^2$ with multiplicity $2$ for each $k\geq 1$. So e.g. $\lambda_1=\lambda_2=(2\pi)^2/L(\pt \O)^2$. Then using Gauss--Bonnet theorem and the Weinstock's result, we can get \eqref{upper} which is weaker than Weinstock's result. For \eqref{high} in the case $n=2$, compare it with Theorem~1.4 in \cite{Kar17}.\cite{Kar17} is short. Recommend to read it roughly.}

%In addition, I guess now there is no need to cite \cite{CEG11}. Read \cite{CEG11} roughly, check it and make a decision.  }

The proof of Theorem~\ref{thm2} is based on Qiu--Xia's weighted Reilly-type formula \cite{QX15} with the same choice of the weight function $V$ as in Theorem~\ref{thm1}.

The rest of the paper is structured as follows. In Section~\ref{sec2} we recall two integral formulas. One is the Qiu--Xia's weighted Reilly-type formula, and the other is a generalized Pohozaev-type identity.  In Section \ref{sec3}, we first recall the Hessian comparison of the distance function to the boundary and then carry out a smoothing procedure on the weight function $V$ defined in \eqref{special-weight}. In Section~\ref{sec4} we present the proofs of Theorems~\ref{thm1} and \ref{thm2}.

\

\section{Weighted Reilly formula and Pohozaev identity}\label{sec2}

At the beginning of this section, we fix our notations.
Let $(\Omega^{n}, g)$ be an $n$-dimensional compact  Riemannian manifold with smooth boundary $\S$. Let $g_\S$ be the induced metric of $\S$. We use $\<\cdot, \cdot\>$ to denote the inner product with respect to both $g$ and $g_\S$ when no confusion occurs. We denote by $\nabla$, $\Delta$ and $\n^2$ the gradient, the Laplacian and the Hessian on $\O$ respectively, while by $\n_\S$ and $\De_\S$ the gradient and the Laplacian on $\S$ respectively.  Let $\nu$ be the unit outward normal of $\S$. We denote by $h(X,Y)=g(\n_X \nu, Y)$ and $H={\rm tr}_{g_\S} h$ the second fundamental form and the mean curvature of $\S$ respectively. Let $dv$ and $da$ be the canonical volume element of $\O$ and $\S$ respectively. Let ${\rm Ric}_g$ be the Ricci curvature tensor of $\O$.

%\subsection{Weighted Reilly-type formula and Pohozaev-type identity}\

We recall the following weighted Reilly-type formula proved by Qiu and the first-named author (See \cite[Thm.~1.1]{QX15} in the case $K=0$).
\begin{prop}[\cite{QX15}]
%Let $(\Omega^n,g)$ be an $n$-dimensional smooth compact connected Riemannian manifold with boundary $\S$.
For two smooth functions $f$ and $V$ on ${\O}$, we have
\begin{align}\label{xeq0}
&\int_\O V\left((\Delta f)^2-|\n^2 f|^2\right)dv\nonumber\\
=&\int_\S V\left[2\pt_\nu f\De_\S f+H(\pt_\nu f)^2+h(\n_\S f, \n_\S f)\right]da\nonumber \\
&{}+\int_\S \pt_\nu V\, |\n_\S f|^2  da +\int_\O \left(\n^2 V -\Delta V   g+ V \mathrm{Ric}_g\right)(\n f, \n f)dv.
\end{align}
%where $\Delta$ and $\n$ are the Laplacian operator and the Levi-Civita connection of the Riemannian metric $g$ respectively, $\Delta_\S$ and $\nabla_\S$ the corresponding ones of the induced Riemannian metric $g_\S$ respectively,  $H$ and $h$ are the mean curvature and the second fundamental form of the boundary respectively, $\mathrm{Ric}_g$ is the Ricci curvature tensor of the Riemannian metric $g$, and $\nu$ is the unit outer normal along the boundary.
\end{prop}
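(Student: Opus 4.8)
The plan is to obtain \eqref{xeq0} by integrating a pointwise identity against the weight $V$ and then integrating by parts repeatedly. I would start from the ``Reilly integrand'', which is the Bochner formula written in divergence form (equivalently, a consequence of commuting covariant derivatives): for $f\in C^\infty(\O)$,
\begin{equation*}
\div\left(\Delta f\,\n f-\n_{\n f}\n f\right)=(\Delta f)^2-|\n^2 f|^2-\mathrm{Ric}_g(\n f,\n f),
\end{equation*}
where $\n_{\n f}\n f$ denotes the vector field metrically dual to the $1$-form $\n^2 f(\n f,\cdot)$. Multiplying by $V$, using $V\div W=\div(VW)-\la\n V,W\ra$, and applying the divergence theorem on $\O$ yields
\begin{align*}
\int_\O V\left((\Delta f)^2-|\n^2 f|^2\right)dv
={}&\int_\O V\,\mathrm{Ric}_g(\n f,\n f)\,dv+\int_\S V\left(\pt_\nu f\,\Delta f-\n^2 f(\n f,\nu)\right)da\\
&{}+\int_\O\left(\n^2 f(\n V,\n f)-\Delta f\,\la\n V,\n f\ra\right)dv.
\end{align*}
For $V\equiv 1$ this is already the first step of the classical Reilly formula.

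The second step is to put the interior term containing $\n V$ into divergence form. Using the identities
\begin{equation*}
\Delta f\,\la\n V,\n f\ra=\div(\la\n V,\n f\ra\,\n f)-\n^2 V(\n f,\n f)-\n^2 f(\n V,\n f)
\end{equation*}
and $2\,\n^2 f(\n V,\n f)=\la\n V,\n|\n f|^2\ra=\div(|\n f|^2\n V)-|\n f|^2\Delta V$, one obtains
\begin{equation*}
\n^2 f(\n V,\n f)-\Delta f\,\la\n V,\n f\ra=\left(\n^2 V-\Delta V\,g\right)(\n f,\n f)+\div\left(|\n f|^2\n V-\la\n V,\n f\ra\,\n f\right).
\end{equation*}
Integrating and applying the divergence theorem once more converts this interior term into $\int_\O(\n^2 V-\Delta V\,g+V\,\mathrm{Ric}_g)(\n f,\n f)\,dv$ plus the boundary contribution $\int_\S(\pt_\nu V\,|\n f|^2-\la\n V,\n f\ra\,\pt_\nu f)\,da$. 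At this point the identity is reduced to an equality of boundary integrals.

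For the boundary analysis I would decompose $\n f=\n_\S f+(\pt_\nu f)\nu$ along $\S$ and use the standard relations
\begin{equation*}
\Delta f=\De_\S f+H\,\pt_\nu f+\n^2 f(\nu,\nu),\qquad \n^2 f(\n_\S f,\nu)=\la\n_\S(\pt_\nu f),\n_\S f\ra-h(\n_\S f,\n_\S f),
\end{equation*}
the latter coming from the fact that $\n_X\nu$ is tangent to $\S$ when $X$ is. Substituting these into $\pt_\nu V\,|\n f|^2-\la\n V,\n f\ra\,\pt_\nu f+V\left(\pt_\nu f\,\Delta f-\n^2 f(\n f,\nu)\right)$, the two occurrences of $\n^2 f(\nu,\nu)$ cancel, the terms $-\la\n_\S V,\n_\S f\ra\,\pt_\nu f$ and $-V\la\n_\S(\pt_\nu f),\n_\S f\ra$ combine into $-\la\n_\S(V\,\pt_\nu f),\n_\S f\ra$, and the $\pt_\nu V\,(\pt_\nu f)^2$ contributions cancel, leaving only $\pt_\nu V\,|\n_\S f|^2$. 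Since $\S$ is a closed manifold, integrating $-\la\n_\S(V\,\pt_\nu f),\n_\S f\ra$ by parts over $\S$ gives $V\,\pt_\nu f\,\De_\S f$; collecting all pieces produces the boundary integrand $V\left[2\pt_\nu f\,\De_\S f+H(\pt_\nu f)^2+h(\n_\S f,\n_\S f)\right]+\pt_\nu V\,|\n_\S f|^2$, which is precisely the right-hand side of \eqref{xeq0}.

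The computation is entirely elementary, so there is no deep obstacle; the step requiring the most care is the bookkeeping of the several boundary terms after the decomposition $\n f=\n_\S f+(\pt_\nu f)\nu$ — in particular verifying that all $\pt_\nu V\,(\pt_\nu f)^2$ contributions cancel so that only $\pt_\nu V\,|\n_\S f|^2$ survives — together with fixing the sign conventions in the Ricci/Bochner identity so that the $\mathrm{Ric}_g$ term enters correctly. Smoothness of $f$ and $V$ on $\overline{\O}$ makes every integration by parts legitimate, and one could alternatively deduce \eqref{xeq0} from the unweighted Reilly formula, though the direct route above seems cleanest.
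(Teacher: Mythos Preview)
Your proof is correct. The paper does not actually prove this proposition: it is quoted verbatim from \cite{QX15} (the case $K=0$ of Theorem~1.1 there) and used as a black box, so there is no ``paper's own proof'' to compare against. Your derivation---multiply the divergence-form Bochner identity by $V$, integrate by parts twice to transfer derivatives onto $V$, then decompose on $\S$ via $\n f=\n_\S f+(\pt_\nu f)\nu$ and integrate by parts once along the closed boundary---is the standard route and is essentially how the identity is established in \cite{QX15}. All the bookkeeping you flag (cancellation of the $\n^2 f(\nu,\nu)$ terms, cancellation of the $\pt_\nu V\,(\pt_\nu f)^2$ terms, and the combination $-V\la\n_\S(\pt_\nu f),\n_\S f\ra-\la\n_\S V,\n_\S f\ra\,\pt_\nu f=-\la\n_\S(V\,\pt_\nu f),\n_\S f\ra$) checks out.
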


We also need the following generalized Pohozaev  type identity (see \cite[Lem.~9]{Xio18}).
\begin{prop}[\cite{PS19,Xio18}]
%Let $(\Omega^n,g)$ be an $n$-dimensional smooth compact connected Riemannian manifold with boundary.
For a smooth vector field $X$ and a harmonic function $f$ on $\O$, we have
\begin{align}\label{poho}
\int_\O \left(\<\n_{\n f}X, \n f\> -\frac12|\n f|^2  {\rm div}_gX \right)dv = \int_\S \left(\p_\nu f \<X, \n f\>-\frac12|\n f|^2 \<X,\nu\>\right) da.\end{align}
\end{prop}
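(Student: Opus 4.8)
The plan is to prove \eqref{poho} by applying the divergence theorem to a single auxiliary vector field and then exploiting the harmonicity of $f$ together with the symmetry of the Hessian. Concretely, I would introduce the ``energy--momentum'' combination
\[
Y \;=\; \<X,\n f\>\,\n f \;-\; \tfrac12 |\n f|^2\, X ,
\]
whose pairing with $\nu$ along $\S$ is exactly the boundary integrand on the right-hand side of \eqref{poho}: $\<Y,\nu\> = \p_\nu f\,\<X,\n f\> - \tfrac12|\n f|^2\<X,\nu\>$. Thus it suffices to verify that ${\rm div}_g Y$ equals the interior integrand $\<\n_{\n f}X,\n f\> - \tfrac12|\n f|^2\,{\rm div}_gX$, after which \eqref{poho} is immediate from $\int_\O {\rm div}_g Y\, dv = \int_\S \<Y,\nu\>\, da$.

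The computation of ${\rm div}_g Y$ is elementary. By the Leibniz rule, ${\rm div}_g(\<X,\n f\>\,\n f) = \<\n\<X,\n f\>,\n f\> + \<X,\n f\>\,\Delta f$, and the last term vanishes because $f$ is harmonic. For the first term, differentiating the pairing gives $Z\<X,\n f\> = \<\n_Z X,\n f\> + \n^2 f(Z,X)$ for any vector $Z$, so taking $Z=\n f$ yields $\<\n\<X,\n f\>,\n f\> = \<\n_{\n f}X,\n f\> + \n^2 f(\n f,X)$. Likewise ${\rm div}_g\bigl(\tfrac12|\n f|^2 X\bigr) = \tfrac12\<\n|\n f|^2,X\> + \tfrac12|\n f|^2\,{\rm div}_gX$, and since $\<\n|\n f|^2,Z\> = 2\,\n^2 f(Z,\n f)$ we obtain $\tfrac12\<\n|\n f|^2,X\> = \n^2 f(X,\n f)$. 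Subtracting, the two Hessian terms $\n^2 f(\n f,X)$ and $\n^2 f(X,\n f)$ cancel by symmetry of $\n^2 f$, leaving ${\rm div}_g Y = \<\n_{\n f}X,\n f\> - \tfrac12|\n f|^2\,{\rm div}_gX$, as required.

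There is essentially no analytic obstacle: $\O$ is compact with smooth boundary and both $X$ and $f$ are smooth, so the divergence theorem applies directly and all integrals converge. The only point deserving care is the bookkeeping that produces the cancellation of the two Hessian contributions --- this is where the symmetry of $\n^2 f$ (and, implicitly, the torsion-freeness of the Levi-Civita connection) is used --- and the observation that it is precisely the harmonicity $\Delta f=0$ which eliminates the extra bulk term $\<X,\n f\>\Delta f$; for general $f$ one would obtain \eqref{poho} with an additional $\int_\O \<X,\n f\>\Delta f\,dv$ on the left. Equivalently, the same argument can be phrased via the stress tensor $T=\n f\otimes\n f-\tfrac12|\n f|^2 g$, which is divergence-free when $f$ is harmonic, contracted against $X$; this is the viewpoint taken in \cite{PS19,Xio18}.
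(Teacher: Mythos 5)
Your proof is correct. The paper itself gives no proof of this identity, citing \cite{PS19,Xio18} instead, and your computation --- applying the divergence theorem to $Y=\<X,\n f\>\,\n f-\tfrac12|\n f|^2X$, using $\Delta f=0$ to kill the bulk term and the symmetry of $\n^2 f$ to cancel the two Hessian contributions --- is precisely the standard argument found in those references, so there is nothing to add.
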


\

\section{Distance function to the boundary}\label{sec3}

In this section, we study the distance function to the boundary and its Greene--Wu type smooth approximation.

\subsection{Hessian comparison of the distance function to the boundary}\

Following the terminology of \cite{Wu79} and \cite{Kas82}, for any continuous function $f\in C(\O)$, we introduce an extended real number $Cf(x;X)$ for a point $x\in \O$ and $X\in T_x\O$ by
\begin{equation}\label{def-cf}
Cf(x;X)=\liminf_{r\rightarrow 0}\frac{f(\exp_x(rX))+f(\exp_x(-rX))-2f(x)}{r^2}.
\end{equation}
When $f\in C^2$, we have $Cf(x;X)=\n^2f|_x(X,X).$

Define the distance function to the boundary $\S$ by
\begin{equation*}
\rho=\rho(x)={\rm dist}(x,  \S).
\end{equation*}
The distance function $\rho$ is smooth away from the cut locus ${\rm Cut}(\S)$ of $\S$. Recall that ${\rm Cut}(\S)$ is defined to be the set of all cut points and a cut point is the first point on a normal geodesic initiating from the boundary $\S$ at which this geodesic fails to minimize  uniquely for the distance function $\rho$. In other words, for $x\in \S$, consider the arc-length parametrized geodesic $\g_x(t)=\exp_x(-t\nu(x))$ ($t\geq 0$). Then
$\g_x(t_0)\in {\rm Cut}(\S)$ for
\begin{equation*}
t_0=t_0(x)=\sup\{t>0:\mathrm{dist}(\g_x(t),\S)=t\}.
\end{equation*}
%{\color{blue} {\bf To be removed.} If $t_0(x)$ is finite, then $\exp_x(-t_0(x)\nu(x))$ is called a cut point of the boundary $\S$.}

The set ${\rm Cut}(\S)$ is known to have zero $n$-dimensional Hausdorff measure; see e.g. \cite[Thm.~B]{IT01}. In addition, under the curvature conditions \eqref{curv-cond}, we have%of ~\ref{thm1}, \ref{thm2} and \ref{thm3},
\begin{eqnarray}\label{dist-max}
\rho_{\max}=\max_{ \O} \rho\leq \frac{1}{c}.
\end{eqnarray}
See e.g. \cite{Li14}.

We recall the following Hessian comparison theorem for the distance function $\rho$ (\cite[Thm.~2.31]{Kas82}).
\begin{thm}[\cite{Kas82}]\label{kasue}
Let $\psi:(0,\rho_{\max}]\rightarrow \R$ be a $C^2$ nonincreasing function. For $x\in \O$, let $\gamma:[0,l]\rightarrow \O$ be a unit speed geodesic joining $\S$ and $x$ such that $\mathrm{dist}(\gamma(t),\S)=t$ for $t\in [0,l]$ and $\gamma(l)=x$. Then for $X\in T_x \O$, we have
\begin{align*}
C(\psi(\rho))(x;X)\geq \left(\psi''\langle \gamma'(l),X\rangle^2+\psi' \frac{\Theta'}{\Theta}(|X|^2-\langle \gamma'(l),X\rangle^2)\right)(\rho(x)),
\end{align*}
where $\Theta(t)\in C^2([0,l])$ satisfies $\Theta''(t)+K(t)\Theta(t)=0$ with $\Theta(0)=1$ and $\Theta'(0)\geq -c$. Here $K(t)$ is a lower bound of the sectional curvature at $\gamma(t)$ of the planes containing $\gamma'(t)$ and $c$ is a lower bound of the principal curvatures of $\S$ at $\gamma(0)$.
\end{thm}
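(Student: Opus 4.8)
The plan is to establish the inequality first on the open set $\O\setminus{\rm Cut}(\S)$, where $\rho$ is smooth, by a Riccati (equivalently, Sturm) comparison for the shape operator of the level sets of $\rho$, and then to propagate it to points of ${\rm Cut}(\S)$ by Calabi's trick, i.e.\ by producing smooth upper barriers for $\rho$. Since $\psi$ is nonincreasing, an upper barrier $\hat\rho\ge\rho$ touching $\rho$ at a point $x$ gives a lower barrier $\psi\circ\hat\rho\le\psi\circ\rho$ there, and then $C(\psi\circ\rho)(x;X)\ge C(\psi\circ\hat\rho)(x;X)$ by comparing second difference quotients; so it suffices to estimate the right-hand side.

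On the smooth part, I would fix $x$ and the minimizing geodesic $\g:[0,l]\to\O$ from $\S$ to $x$ as in the statement, so that $\g'(t)=\n\rho(\g(t))$, and introduce the symmetric endomorphism $W(t)$ of $\g'(t)^\perp$ given by $W(t)Y=\n_Y\n\rho$ --- equivalently, the restriction of $\n^2\rho|_{\g(t)}$ to $\g'(t)^\perp$, i.e.\ the Weingarten map of $\{\rho=t\}$. It obeys the Riccati equation $W'+W^2+R(\cdot,\g')\g'=0$ along $\g$, with initial value $W(0)=-h|_{\g(0)}\le-c\,{\rm Id}$. The sectional curvature lower bound gives $R(\cdot,\g')\g'\ge K(t)\,{\rm Id}$ on $\g'(t)^\perp$, while $u:=\Theta'/\Theta$ solves $u'+u^2+K=0$ with $u(0)=\Theta'(0)\ge-c\ge$ every eigenvalue of $W(0)$; the Riccati comparison theorem then yields $W(t)\le u(t)\,{\rm Id}$ for $t\in[0,l]$. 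Since $\n^2\rho(\n\rho,\cdot)=0$, decomposing $X$ into its components along and orthogonal to $\g'(l)=\n\rho|_x$ gives
\[
\n^2\rho|_x(X,X)\le\frac{\Theta'}{\Theta}(\rho(x))\big(|X|^2-\la\g'(l),X\ra^2\big),
\]
and, using $\psi'\le0$, the chain rule $\n^2(\psi\circ\rho)(X,X)=\psi''(\rho)\la\n\rho,X\ra^2+\psi'(\rho)\,\n^2\rho(X,X)$ reproduces exactly the stated inequality, with $C(\psi\circ\rho)(x;X)=\n^2(\psi\circ\rho)|_x(X,X)$ because $\psi\circ\rho$ is $C^2$ near $x$.

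For a point $x\in{\rm Cut}(\S)$, I would keep a minimizing geodesic $\g$ from $\S$ to $x=\g(l)$. For small $\delta>0$, a neighborhood of $\g(\delta)$ in the level set $\S_\delta:=\{\rho=\delta\}$ is a smooth hypersurface with Weingarten map $W(\delta)\le u(\delta)\,{\rm Id}$ at $\g(\delta)$, by the first step. Taking $\hat\rho:=\delta+{\rm dist}(\cdot,\S_\delta)$ near $\g([\delta,l])$, the triangle inequality gives $\rho\le\hat\rho$, with equality at $x$ since $\g|_{[\delta,l]}$ realizes ${\rm dist}(x,\S_\delta)=l-\delta$. If $x$ is not a focal point of $\S$ along $\g$, then $\hat\rho$ is smooth near $x$, its Weingarten map at $x$ is again $\le(\Theta'/\Theta)(\rho(x))\,{\rm Id}$ by the Riccati comparison restarted at $t=\delta$, and the smooth-part computation applied to $\hat\rho$ delivers the desired lower bound for $C(\psi\circ\hat\rho)(x;X)$, hence for $C(\psi\circ\rho)(x;X)$.

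The step I expect to be the main obstacle is the analysis on ${\rm Cut}(\S)$, and in particular at focal points, where $(\Theta'/\Theta)(\rho)$ remains finite as long as $\rho<\rho_{\max}$ but the genuine Hessian of $\rho$ degenerates to $-\infty$ along the focal directions, and where the barrier $\hat\rho$ above need not be smooth. Along such directions the second difference quotient of $\rho$ tends to $-\infty$, so --- $\psi$ being strictly decreasing in the range of interest --- that of $\psi\circ\rho$ tends to $+\infty$ and the asserted inequality is automatic; along the remaining directions the barrier argument still applies. Making this dichotomy precise, and more generally bookkeeping the one-sided second differences across the non-smooth locus, is the delicate point; it is dealt with systematically by Wu's calculus of the generalized Hessian $C$ and of barrier functions for distance functions, which is the framework in which this comparison theorem is proved in \cite{Wu79,Kas82}.
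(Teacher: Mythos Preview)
The paper does not give a proof of this statement: it is quoted as a known result, namely \cite[Thm.~2.31]{Kas82}, and the paper only \emph{applies} it (with $\psi=-V$, $K\equiv 0$, $\Theta(t)=1-ct$) to obtain the subsequent proposition. So there is no in-paper proof to compare your proposal against.

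That said, your sketch is the standard route to such Hessian comparison theorems and is precisely the framework of \cite{Wu79,Kas82} you invoke at the end: Riccati/Sturm comparison for the shape operator of the $\rho$-level sets on $\O\setminus{\rm Cut}(\S)$, followed by Calabi-type upper barriers $\hat\rho=\delta+{\rm dist}(\cdot,\S_\delta)$ at cut points, the monotonicity $\psi'\le 0$ turning an upper barrier for $\rho$ into a lower barrier for $\psi\circ\rho$. Your sign bookkeeping (with $\n\rho|_\S=-\nu$, hence $W(0)=-h\le -c\,{\rm Id}\le u(0)\,{\rm Id}$) and the chain-rule step are correct. The one genuinely delicate point you flag---focal points of $\S$ along $\g$, where the barrier $\hat\rho$ need not be smooth---is exactly where the full machinery of Wu's generalized Hessian $C$ is needed; your heuristic that along focal directions the second difference quotient of $\rho$ goes to $-\infty$ (hence that of $\psi\circ\rho$ to $+\infty$ when $\psi'<0$) is morally right but not a proof as stated, and the careful treatment is indeed what \cite{Kas82} supplies.
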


We shall apply the above theorem to $ -V(\rho)$, where $V(\rho)$ is given by
\begin{align}\label{v-def}
V=V(\rho)=\rho-\frac{c}{2}\rho^2.
\end{align}
It is easy to see that $V>0$ and $-V$ is a nonincreasing function thanks to \eqref{dist-max}. We remark that $V(\rho)$ should be compared with the function $\eta$ defined in \cite{PS19} and \cite{Xio18} which also depends on the distance function $\rho$ to the boundary. First, $\eta$ is defined only on a tubular neighborhood of the boundary, while $V$ is on the whole $\O$. Second, the $\eta$ which can be compared, is defined for Riemannian manifolds with nonpositive sectional curvature, while $V$ is for those with nonnegative sectional curvature.

By choosing $K(t)=0$ and $\Theta(t)=1-ct$ in Theorem \ref{kasue}, and noting that $V'(\rho)=\Theta(\rho)$, we find the following comparison for $-V$.
\begin{prop}
Let $(\O, g)$ be as in Theorem \ref{thm1} and $V$ be defined by \eqref{v-def}. Then
\begin{equation}
C(-V(\rho))(x;X)\geq c
\end{equation}
for any $x\in \O$ and any unit vector $X\in T_x\O$.
\end{prop}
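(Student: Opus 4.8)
The plan is to apply Kasue's Hessian comparison theorem (Theorem~\ref{kasue}) directly to the function $-V(\rho)$, viewed as $\psi(\rho)$ with $\psi(t)=-V(t)=-t+\tfrac{c}{2}t^2$. First I would check the hypotheses: by \eqref{dist-max} we have $\rho_{\max}\leq 1/c$, so on $(0,\rho_{\max}]$ the derivative $\psi'(t)=-1+ct\leq 0$, i.e.\ $\psi$ is a $C^2$ nonincreasing function as required. Since $(\O,g)$ satisfies $\mathrm{Sect}_g\geq 0$ and $h\geq cg_\S$, we may take the lower curvature bound $K(t)\equiv 0$ and the boundary principal curvature lower bound to be exactly $c$; then the comparison function $\Theta$ solving $\Theta''+K\Theta=0$, $\Theta(0)=1$, $\Theta'(0)\geq -c$ can be chosen to be $\Theta(t)=1-ct$.

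Next I would compute the two quantities entering the right-hand side of Theorem~\ref{kasue}. We have $\psi''(t)=c$, and $\Theta'(t)/\Theta(t)=-c/(1-ct)$. A convenient observation is that $V'(\rho)=1-c\rho=\Theta(\rho)$, so that $\psi'(t)=-V'(t)=-\Theta(t)$ and hence $\psi'(t)\,\Theta'(t)/\Theta(t)=-\Theta(t)\cdot\Theta'(t)/\Theta(t)=-\Theta'(t)=c$. Therefore both coefficients — the one multiplying $\langle\gamma'(l),X\rangle^2$ and the one multiplying $|X|^2-\langle\gamma'(l),X\rangle^2$ — are equal to $c$. Plugging into the theorem gives
\begin{equation*}
C(-V(\rho))(x;X)\geq c\langle\gamma'(l),X\rangle^2+c\bigl(|X|^2-\langle\gamma'(l),X\rangle^2\bigr)=c|X|^2,
\end{equation*}
which for a unit vector $X$ is exactly $c$.

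The argument is essentially a routine specialization, so there is no serious obstacle; the only points that need care are verifying that $-V$ is genuinely nonincreasing on the relevant interval (which is where \eqref{dist-max}, hence the curvature hypothesis \eqref{curv-cond}, is used) and correctly identifying $\Theta(t)=1-ct$ as an admissible solution of the Jacobi-type ODE with the right initial data. The pleasant cancellation $\psi'\Theta'/\Theta=c$ makes the tangential and normal directions contribute the same constant, which is what yields the clean bound $C(-V(\rho))(x;X)\geq c$ for all unit $X$ and all $x\in\O$, including on $\mathrm{Cut}(\S)$ where $\rho$ fails to be smooth (the point of working with the generalized second derivative $C$ rather than $\nabla^2$).
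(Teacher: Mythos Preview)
Your proof is correct and follows exactly the approach the paper takes: apply Theorem~\ref{kasue} to $\psi=-V$ with $K(t)\equiv 0$ and $\Theta(t)=1-ct$, observing that $V'(\rho)=\Theta(\rho)$ so that both coefficients collapse to $c$. You have simply written out in full the computations the paper leaves implicit.
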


\subsection{Smoothing of the distance function}\

We shall use \eqref{xeq0} and \eqref{poho} with $V$ involved. However, the function $V(\rho)$ is not smooth on ${\rm Cut}(\S)$ so that we cannot apply \eqref{xeq0} and \eqref{poho} directly to $V$. To overcome this problem, we construct a smooth Greene--Wu type approximation by the Riemannian convolution and a gluing procedure. More precisely, we have the following result.

\begin{prop}\label{prop-smoothing}
Fix a neighborhood $\cal{C}$ of ${\rm Cut}(\S)$ in $\O$. Then for any $\ve>0$, there exists a smooth nonnegative function ${V}_\ve$ on $\O$ such that $V_\ve=V$ on $\O\setminus \cal{C}$ and
\begin{equation}
\n^2 (-{V}_\ve)\geq (c-\ve)g.
\end{equation}
\end{prop}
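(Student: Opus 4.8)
The plan is to use the Greene--Wu Riemannian convolution (smoothing by integrating against a mollifier along geodesic normal coordinates) to produce a smooth function that is close to $-V$ in $C^0$ and whose generalized Hessian bound $C(-V(\rho))\geq c$ passes — up to an arbitrarily small loss $\varepsilon$ — to an honest Hessian bound $\nabla^2 \geq (c-\varepsilon)g$ for the smoothed function, then glue this smoothed function to the original $V$ on the region where $V$ is already smooth. The key mechanism is the standard fact (due to Greene--Wu, as refined by e.g. Bangert, and recorded in the literature on convex functions on manifolds) that if $u$ is continuous and satisfies $Cu(x;X)\geq \kappa|X|^2$ everywhere in the weak sense of Wu, then for every $\varepsilon>0$ there is a smooth $u_\varepsilon$ with $\|u_\varepsilon-u\|_{C^0}<\varepsilon$ on a prescribed compact set and $\nabla^2 u_\varepsilon \geq (\kappa-\varepsilon)g$ there. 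We apply this with $u=-V(\rho)$ and $\kappa=c$, using the Proposition just proved.

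Concretely, I would carry out the following steps. First, fix the neighborhood $\mathcal{C}$ of ${\rm Cut}(\S)$ and choose a slightly smaller open neighborhood $\mathcal{C}'$ with $\overline{\mathcal{C}'}\subset\mathcal{C}$ and $\overline{\mathcal{C}'}$ still containing ${\rm Cut}(\S)$; on a compact neighborhood of $\overline{\mathcal{C}}$ apply the Greene--Wu convolution to $-V$ to obtain $\widetilde{V}_\ve$ smooth with $\nabla^2 \widetilde{V}_\ve \geq (c-\ve/2)g$ there and $\|\widetilde{V}_\ve - (-V)\|_{C^0}$ as small as we like. Second, perform the gluing: choose a cutoff $\chi$ that is $1$ on $\mathcal{C}'$ and supported in $\mathcal{C}$, and set
$$
-V_\ve = \chi\,\widetilde{V}_\ve + (1-\chi)(-V) + (\text{a small convex correction}).
$$
The subtlety is that a naive interpolation $\chi\widetilde{V}_\ve+(1-\chi)(-V)$ does not automatically preserve the Hessian bound on the transition annulus $\{0<\chi<1\}$, because the difference $\widetilde{V}_\ve-(-V)$ is only small in $C^0$, not $C^2$, so the terms $\nabla\chi\otimes\nabla(\widetilde V_\ve+V)$ and $(\Delta\chi)(\widetilde V_\ve+V)$ are not controlled. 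To fix this, I would instead do the smoothing and gluing on the level of a function of $\rho$: note that $-V$ depends only on $\rho$, and on the transition region one can choose $\mathcal C$ so that near $\partial\mathcal C$ on the "good" side the function $-V(\rho)$ is already smooth; then one smooths only across ${\rm Cut}(\S)$ and arranges, by taking the mollification scale small enough and by adding a term of the form $-\delta w$ with $w$ a fixed smooth strictly convex (i.e. $\nabla^2 w\geq g$) function vanishing to high order outside $\mathcal C$, that the error terms from the gluing are dominated. Third, absorb all the $O(\ve)$ and $O(\delta)$ errors by relabeling: for the final $\ve$ in the statement, run the construction with $\ve/3$ at each stage. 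Finally, note $-V_\ve$ so constructed is $\leq 0$ away from $\mathcal C$ (since $-V_\ve = -V \leq 0$ there as $V>0$), but we actually want $V_\ve$ nonnegative everywhere; since $-V_\ve$ agrees with $-V$ outside $\mathcal{C}$ and is merely a small $C^0$-perturbation of $-V$ inside, and $V\geq 0$ with $V$ bounded, adding a suitable constant or instead noting $V_\ve = V \geq 0$ on $\O\setminus\mathcal C$ and $V_\ve \geq V - \ve \geq -\ve$ on $\mathcal C$ — one then replaces $\mathcal{C}$-values by $\max(V_\ve,0)$ only if needed, but more cleanly one builds nonnegativity in from the start by translating $\widetilde V_\ve$ upward by its (small) $C^0$-error, which costs nothing in the Hessian.

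The main obstacle I expect is precisely the gluing step: ensuring that the passage from the weak Hessian inequality $C(-V)\geq c$ to a pointwise smooth inequality, combined with the transition between the mollified piece and the unmollified piece, does not destroy the constant. The Greene--Wu machinery handles the interior of $\mathcal C$ cleanly; the delicate point is the annular region where we interpolate, and the resolution is to choose $\mathcal{C}$ and the mollification parameter so that the mollified function already coincides with $-V$ in a collar inside $\mathcal C$ adjacent to $\partial\mathcal C$ (possible because $-V$ is smooth there, and Greene--Wu convolution of a smooth function with a tiny kernel returns something $C^2$-close to it), so that no genuine interpolation is needed — the two pieces match smoothly. I would also remark, following Kasue and Wu as cited, that the weak inequality $C(-V)\geq c$ holding \emph{everywhere} (including on ${\rm Cut}(\S)$) is exactly the hypothesis the Greene--Wu smoothing theorem requires, which is why the earlier Proposition is stated in that generality.
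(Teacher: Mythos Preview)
Your approach is essentially the paper's: Greene--Wu Riemannian convolution of $-V$ on a neighborhood of ${\rm Cut}(\S)$, followed by a cutoff gluing (the paper attributes the gluing step to Ghomi). The one point where your exposition wobbles is the gluing. The paper does exactly the ``naive'' interpolation you worry about, namely $V_\tau=\phi\,\widetilde V_\tau+(1-\phi)V$ with $\phi\equiv 1$ on an inner neighborhood $O_1$ of ${\rm Cut}(\S)$ and $\mathrm{supp}\,\phi\subset O_2=\mathcal C$; no auxiliary convex correction $-\delta w$ is needed. The reason it works is precisely the observation you make in passing: on the transition annulus $\overline{O_2}\setminus O_1$ the function $V$ is already smooth (this region is disjoint from ${\rm Cut}(\S)$), and for smooth $V$ the Greene--Wu convolution satisfies $\widetilde V_\tau\to V$ in $C^2$ as $\tau\to 0$ (Lemma~3(3) of \cite{GW76}); since $V_\tau-V=\phi\cdot(\widetilde V_\tau-V)$, the $C^2$-error on the annulus can be made smaller than any $\ve$. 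Note, however, that the mollified function does \emph{not} literally coincide with $-V$ on that collar, so a genuine interpolation is performed---it is simply harmless because the $C^2$-distance is small. For nonnegativity, the paper just observes that $V>0$ on $\O$ and both convolution and convex combination with $V$ preserve positivity; your translation/truncation maneuvers are unnecessary.
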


The remaining of this section is devoted to the proof of Proposition~\ref{prop-smoothing}.

For notational convenience, let us write $O_2$ for $\cal{C}$ and choose two other neighborhoods $O_1$ and $O_3$ of ${\rm Cut}(\S)$ such that
\begin{equation}
O_{1}\subset\subset O_{2} \subset\subset O_{3}\subset\subset \O,
\end{equation}
where ``$A\subset\subset B$'' for two sets $A$ and $B$ means ``$\overline{A}\subset B$ and $\overline{A}$ is compact''.
We shall first mollify $V$ on $O_{3}$ by the standard Riemannian convolution.

Recall that the Riemannian convolution, introduced by Greene and Wu  \cite{GW72-73,GW76,GW79}, is defined by
\begin{align*}
\widetilde{V}_\tau(x)=\frac{1}{\tau^n}\int_{v\in T_xM}V(\exp_x(v))\theta\left(\frac{|v|}{\tau}\right)d\mu_x,
\end{align*}
where $\mu_x$ is the Lebesgue measure on $T_xM$ determined by the Riemannian metric $g$ at $x$ and $\theta$ is a smooth nonnegative function on $\R$ with support in $[-1,1]$ which is a positive constant in a neighborhood of $0$ and satisfies
\begin{equation*}
\int_{\R^n} \theta(|x|)dx=1.
\end{equation*}
In the following we assume $\tau<\mathrm{dist}(O_3,\S)$. So we get a smooth function $\widetilde{V}_\tau$ on $O_3$.

Next recall the following definition of convexity for continuous functions;  see for example  \cite[page 60]{GW79}.
\begin{defn}[\cite{GW79}]
Let $f:M\rightarrow \R$ be a continuous function on a Riemannian manifold $M$ and $\xi$ be a real number. The function $f$ is called $\xi$-convex at a point $p\in M$ if there is a positive number $\delta$ such that the function $q\mapsto f(q)-((\xi+\delta)/2)\mathrm{dist}^2(p,q)$ is convex in a neighborhood of $p$.
\end{defn}

Using this definition, we can prove the following result.
\begin{lem}\label{lem-convex}
The function $-V$ defined by \eqref{v-def} is $(c-\eta)$-convex  on $O_3$ for any $\eta>0$.
\end{lem}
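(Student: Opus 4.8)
\textbf{Proof proposal for Lemma~\ref{lem-convex}.}

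The plan is to reduce $(c-\eta)$-convexity of $-V$ at an arbitrary point $p\in O_3$ to the pointwise Hessian comparison established in the preceding Proposition, namely $C(-V(\rho))(x;X)\geq c$ for all $x\in\O$ and all unit $X\in T_x\O$. Recall that by definition we must produce, given $\eta>0$ and $p$, a number $\delta>0$ such that $q\mapsto (-V)(q)-\frac12(c-\eta+\delta)\,\mathrm{dist}^2(p,q)$ is convex in a neighborhood of $p$. Choosing $\delta=\eta/2$, it suffices to show that $w:=-V-\frac12(c-\tfrac{\eta}{2})\,\mathrm{dist}^2(p,\cdot)$ is convex near $p$, and the natural way to do this is via the support-function/barrier characterization of convexity for merely continuous functions used by Greene--Wu and Kasue: a continuous function is convex on an open set precisely when it is supported from below at every point by smooth functions with uniformly controlled (nonnegative, up to $o(1)$) Hessians, or equivalently when $Cw(x;X)\geq 0$ in a suitably uniform sense for all $x$ near $p$ and all $X$.

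First I would record the behavior of the two pieces of $w$ separately. For the distance-squared term, $\mathrm{dist}^2(p,\cdot)$ is smooth near $p$ and, by the standard second-variation / Hessian-of-distance-squared estimate on a manifold (with sectional curvature locally bounded, which holds on the compact set $\overline{O_3}$), its Hessian at points $q$ in a small ball $B_r(p)$ satisfies $\nabla^2\bigl(\tfrac12\mathrm{dist}^2(p,\cdot)\bigr)\leq (1+\kappa(r))\,g$, where $\kappa(r)\to0$ as $r\to0$; in particular on a small enough ball it is at most $(1+\tfrac{\eta}{4c})\,g$, say. For the term $-V(\rho)$, the Proposition gives the clean bound $C(-V(\rho))(x;X)\geq c\,|X|^2$ at \emph{every} point $x$, with no exceptional set, because the inequality is phrased through the lower generalized Hessian $C(\cdot\,;\cdot)$ of Wu. Combining: for $x\in B_r(p)$ and unit $X$,
\[
Cw(x;X)\;\geq\;C(-V(\rho))(x;X)-\bigl(c-\tfrac{\eta}{2}\bigr)\nabla^2\!\bigl(\tfrac12\mathrm{dist}^2(p,\cdot)\bigr)\!\big|_x(X,X)\;\geq\;c-\bigl(c-\tfrac{\eta}{2}\bigr)\bigl(1+\kappa(r)\bigr),
\]
using subadditivity of $\liminf$ (the $C$-operator is superadditive: $C(f_1+f_2)\geq Cf_1+Cf_2$ when one summand is $C^2$). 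The right-hand side tends to $\tfrac{\eta}{2}>0$ as $r\to0$, so for $r$ small it is positive; hence $Cw(x;X)>0$ for all $x\in B_r(p)$ and all unit $X$.

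Finally I would invoke the standard fact (Wu \cite{Wu79}, Greene--Wu \cite{GW79}, or Kasue \cite{Kas82}) that a continuous function $w$ on an open set with $Cw(x;X)\geq0$ for all $x$ and all $X$ there is convex on that set — more precisely, strict positivity of $Cw$ on $B_r(p)$ gives convexity of $w$ on a possibly smaller concentric ball, which is exactly the local statement required. This yields convexity of $w=-V-\frac12(c-\eta+\delta)\mathrm{dist}^2(p,\cdot)$ near $p$ with $\delta=\eta/2>0$, i.e. $-V$ is $(c-\eta)$-convex at $p$; since $p\in O_3$ was arbitrary, $-V$ is $(c-\eta)$-convex on $O_3$.

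I expect the only delicate point to be the bookkeeping of the two nonsmooth/curvature-dependent quantities at the same scale: one must make sure the ball $B_r(p)$ on which the distance-squared Hessian bound $(1+\kappa(r))g$ holds is chosen \emph{after} fixing $\eta$ (so that $\kappa(r)$ can be absorbed), and one must cite the correct version of the Wu-convexity criterion — the one that upgrades a pointwise lower bound on $C w$ to genuine (Alexandrov/Riemannian) convexity on a neighborhood — rather than just asserting it. The superadditivity $C(f+\varphi)\geq Cf+\nabla^2\varphi$ for $\varphi\in C^2$ is elementary from the definition \eqref{def-cf} but should be stated explicitly, since it is the mechanism that lets the smooth distance-squared correction be subtracted cleanly from the non-smooth $-V$.
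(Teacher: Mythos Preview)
Your proposal is correct and follows essentially the same route as the paper: both verify $(c-\eta)$-convexity at an arbitrary $p\in O_3$ by showing that $-V-\tfrac12(c-\eta+\delta)\,\mathrm{dist}^2(p,\cdot)$ has nonnegative generalized second derivative near $p$, combining the Proposition's bound $C(-V)\geq c$ with a near-Euclidean upper bound on the Hessian (equivalently, the second difference) of $\mathrm{dist}^2(p,\cdot)$. The only cosmetic difference is that the paper works directly with the raw second difference $\varphi(\exp_q(rX))+\varphi(\exp_q(-rX))-2\varphi(q)$ and chooses $\delta,\epsilon$ small at the end, whereas you fix $\delta=\eta/2$ up front, phrase the computation through the $C$-operator and superadditivity $C(f+\varphi)\geq Cf+\nabla^2\varphi$ for $\varphi\in C^2$, and are more explicit in flagging the final step ``$Cw\geq 0$ on a ball $\Rightarrow$ $w$ convex there'' as requiring a citation to Wu/Greene--Wu.
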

\begin{proof}
Fix a point $p\in O_3$ and $\eta>0$. We need to prove there exist a neighborhood $U$ of $p$ and a positive number $\delta$ such that the function $$\varphi(q)=-V(q)-\frac{c-\eta+\delta}{2}\mathrm{dist}^2(p,q)$$ is convex on $U$. Equivalently, for any $q\in U$ and a unit $X\in T_qU$, we need to prove
\begin{equation*}
\varphi(\exp_q(rX))+\varphi(\exp_q(-rX))-2\varphi(q)\geq 0
\end{equation*}
for small $r>0$.

First since $C(-V)(y;Y)\geq c$ for any $y\in O_3$ and any unit $Y\in T_y\Omega$, and $\overline{O_3}$ is compact, by the definition of \eqref{def-cf}, we conclude that when $r$ is small enough,
\begin{align*}
-V(\exp_q(rX))-V(\exp_q(-rX))+2V(q)\geq \left(c-\frac{\eta}{2}\right)r^2.
\end{align*}

Then we deduce
\begin{align*}
&\quad \varphi(\exp_q(rX))+\varphi(\exp_q(-rX))-2\varphi(q)\\
&=-V(\exp_q(rX))-V(\exp_q(-rX))+2V(q) -\frac{c-\eta+\delta}{2}A(r) \\
&\geq  \left(c-\frac{\eta}{2}\right)r^2-\frac{c-\eta+\delta}{2}A(r),
\end{align*}
where $A(r):=\mathrm{dist}^2(p,\exp_q(rX))+\mathrm{dist}^2(p,\exp_q(-rX))-2\mathrm{dist}^2(p,q)$.

When $\O\subset \R^n$, we know exactly $A(r)=2r^2$. Now if we choose $U$ small enough, the metric on $U$ is close to the Euclidean metric. So for $U$ small, we have
\begin{align*}
0\leq A(r)\leq 2(1+\epsilon )r^2
\end{align*}
for small $\epsilon$ and small $r$.

As a consequence, we get
\begin{align*}
&\quad \varphi(\exp_q(rX))+\varphi(\exp_q(-rX))-2\varphi(q) \\
&\geq   \left(c-\frac{\eta}{2}\right)r^2-(c-\eta+\delta)(1+\epsilon)r^2\\
&=  \left(\frac{\eta}{2}-\delta-(c-\eta+\delta)\epsilon\right)r^2\geq  0,
\end{align*}
provided that $\delta$ and $\epsilon$ are chosen small enough. So we finish the proof.

\end{proof}

Next we need the approximation result \cite{GW72-73,GW76,GW79} for  $\xi$-convex functions by its Riemannian convolution. The case $\xi=0$ was considered in \cite{GW72-73,GW76}. The general case follows from the same proof; see pp.~60--61 in \cite{GW79}.
\begin{prop}[{\cite{GW72-73,GW76,GW79}}]\label{prop-appro}
If $f$ is a $\xi$-convex function on a Riemannian manifold $M$ and $K$ is a compact subset of $M$, then there exist a neighborhood of $K$ and a $\tau_0>0$ such that for all $\tau\in (0,\tau_0)$, the Riemannian convolution $\widetilde{f}_\tau$ of $f$ is $\xi$-convex on the neighborhood.
\end{prop}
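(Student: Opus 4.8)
\textbf{Proof proposal for Proposition~\ref{prop-appro}.}

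The plan is to follow the classical Greene--Wu argument, reducing the general $\xi$-convex case to the $\xi=0$ case that was treated in \cite{GW72-73,GW76}, then handling the $\xi=0$ case directly. The key observation is that being $\xi$-convex at $p$ is, by definition, equivalent to saying that locally $f(q)-\frac12(\xi+\delta)\mathrm{dist}^2(p,q)$ is (ordinarily) convex for some $\delta>0$; but $\mathrm{dist}^2(p,\cdot)$ is not globally defined, so to make a clean reduction one replaces it by a fixed smooth function with controlled Hessian. So first I would recall the standard fact that one can cover the compact set $K$ by finitely many geodesically convex balls $B_i$ on each of which there is a smooth function $\phi_i$ with $c_1 g\le \n^2\phi_i\le c_2 g$ for uniform constants $c_1,c_2>0$ (e.g. $\phi_i=\mathrm{dist}^2(p_i,\cdot)$ for a suitable center $p_i$, whose Hessian is two-sided bounded on a small enough ball by the Hessian comparison theorem). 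The function $f-\frac{\xi}{2c_1}\,c_1\phi_i$ — more precisely $f-\lambda_i\phi_i$ with $\lambda_i$ chosen so that $2\lambda_i\n^2\phi_i\ge \xi g$ in the appropriate weak sense, i.e. $\lambda_i\ge \xi/(2c_1)$ when $\xi\ge 0$ and any $\lambda_i>0$ works when $\xi\le 0$ — is then $0$-convex on $B_i$ in the sense of \eqref{def-cf}, because $C$-operators add under sums with smooth functions: $C(f-\lambda_i\phi_i)(x;X)=Cf(x;X)-\lambda_i\n^2\phi_i(X,X)$.

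Next I would invoke the $\xi=0$ result: a function which is $0$-convex on a neighborhood of a compact set has a Riemannian convolution which is $0$-convex (indeed smooth and convex) on a slightly smaller neighborhood for all small $\tau$. Applying this to $f-\lambda_i\phi_i$ on $B_i$, we get that $\widetilde{(f-\lambda_i\phi_i)}_\tau$ is convex on a smaller ball $B_i'$ for $\tau<\tau_i$. The remaining point is that Riemannian convolution is \emph{not} linear over subtraction of $\phi_i$, so $\widetilde f_\tau-\widetilde{(\lambda_i\phi_i)}_\tau\ne \widetilde{(f-\lambda_i\phi_i)}_\tau$ in general. However, the discrepancy is $O(\tau^2)$ in $C^2$ on compact sets because $\phi_i$ is smooth — convolution of a smooth function converges to it in $C^2$ with error controlled by $\tau^2$ times bounds on third derivatives. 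Hence $\n^2 \widetilde f_\tau \ge \n^2\widetilde{(f-\lambda_i\phi_i)}_\tau + 2\lambda_i\n^2\phi_i - C\tau^2 g \ge \xi g - C\tau^2 g$ on $B_i'$. This is $\xi$-convexity up to an arbitrarily small loss; to get $\xi$-convexity on the nose one absorbs the $C\tau^2 g$ into the definitional slack $\delta$, or — as is standard — observes that the definition of $\xi$-convex already allows a positive $\delta$, so for $\tau$ small enough $\widetilde f_\tau$ is $\xi$-convex (with $\delta$ replaced by $\delta/2$, say, after shrinking). Finally, take $\tau_0=\min_i\tau_i$ and the neighborhood $\bigcup_i B_i'\supset K$; then for all $\tau\in(0,\tau_0)$ the convolution $\widetilde f_\tau$ is $\xi$-convex on this neighborhood, which is what is claimed.

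For the underlying $\xi=0$ (i.e. ordinary convexity) case, which I would either cite verbatim from \cite{GW72-73,GW76} or sketch for completeness: the point is that for a convex function $u$ on a geodesically convex ball, $Cu(x;X)\ge 0$ everywhere, and one computes $C\widetilde u_\tau$ by differentiating under the integral sign; using normal coordinates centered at $x$ and the fact that the Jacobian of $\exp_x$ is $1+O(|v|^2)$, the leading term of $\n^2\widetilde u_\tau(x)$ is an average of $\n^2 u$ over a ball (in the barycentric/$C$-sense) which is nonnegative, plus curvature correction terms of lower order that one controls by taking $\tau$ small. The smoothness of $\widetilde u_\tau$ is immediate since $\theta$ is smooth and compactly supported, by differentiating under the integral after the change of variables $w=v/\tau$.

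\textbf{Main obstacle.} The technically delicate step is the non-linearity of Riemannian convolution: unlike Euclidean mollification, $\widetilde{(f+\phi)}_\tau\ne\widetilde f_\tau+\widetilde\phi_\tau$, so the reduction from $\xi$-convex to $0$-convex cannot be done by an exact algebraic identity. One must instead control the $C^2$-difference between convolving $f-\lambda\phi$ and convolving $f$ separately, which requires the quantitative statement that convolution of the \emph{smooth} piece $\phi$ differs from $\phi$ by $O(\tau^2)$ in $C^2$ on compact sets — together with a uniform (in $\tau$) lower bound on the Hessian of the convolution of the convex piece, so that the $O(\tau^2)$ error is genuinely negligible. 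Getting these estimates uniform over the finitely many charts covering $K$, and making sure all the implied constants are independent of $\tau$, is the heart of the matter; everything else is the standard Greene--Wu machinery.
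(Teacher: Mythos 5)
The paper offers no proof of this proposition: it is quoted verbatim from Greene--Wu, with the remark that the case $\xi=0$ is in \cite{GW72-73,GW76} and that the general case ``follows from the same proof'' on pp.~60--61 of \cite{GW79}. So there is no in-paper argument to compare yours against; I am judging the reconstruction on its own. Your overall architecture --- cover $K$ by finitely many balls carrying smooth potentials $\phi_i$ with two-sided Hessian bounds, subtract $\lambda_i\phi_i$ to reduce $\xi$-convexity to $0$-convexity, convolve, and put $\phi_i$ back --- is indeed how the reduction is done, and the finite cover gives the uniform $\delta$ and uniform $\tau_0$ that the pointwise definition does not.

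Two points need correcting. First, what you call the ``main obstacle'' does not exist: the Greene--Wu convolution as defined in the paper, $\widetilde V_\tau(x)=\tau^{-n}\int_{T_xM}V(\exp_x v)\,\theta(|v|/\tau)\,d\mu_x$, is an integral of $V\circ\exp_x$ against a kernel that does not depend on $V$, hence is exactly linear, and $\widetilde{(f-\lambda\phi)}_\tau=\widetilde f_\tau-\lambda\widetilde\phi_\tau$ holds identically. What fails on a manifold is not additivity but the reproduction of smooth functions, $\widetilde\phi_\tau\neq\phi$; the $C^2$-estimate $\|\widetilde\phi_\tau-\phi\|_{C^2}=O(\tau^2)$ (Lemma~3 of \cite{GW76}) is exactly what you need, and combined with linearity it yields your displayed Hessian inequality. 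So the error is in the diagnosis, not in the estimate, and the proof survives --- in fact it simplifies. Second, your sketch of the $\xi=0$ base case silently uses ``convex $\Rightarrow$ convolution is convex,'' which is false on a curved manifold: the curvature correction terms in $\n^2\widetilde u_\tau$ are $o(1)$ but can be negative, so the convolution of a merely convex function need only be $-\epsilon(\tau)$-convex. The base case is salvaged precisely because $0$-convexity in the paper's Definition carries a strictly positive modulus $\delta$ that absorbs these corrections for small $\tau$; this is the actual analytic heart of Greene--Wu and should be stated, not elided. (Also keep track of the factor $2$ in $\n^2\mathrm{dist}^2(p,\cdot)|_p=2g$ when normalizing $\lambda_i$.) With these repairs the argument is a faithful reconstruction of the cited Greene--Wu proof.
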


For $\ve>0$, by Lemma~\ref{lem-convex} we know that $-V$ is $(c-\ve)$-convex on $O_3$. Applying Proposition~\ref{prop-appro} to $-V$ with $K=\overline{O_2}$, we have the following result.
\begin{lem}
For $\ve>0$, there exists a $\tau_0>0$ such that $-\widetilde{V}_\tau$ is $(c-\ve)$-convex on $O_{2}$ for $\tau\in (0,\tau_0)$.
\end{lem}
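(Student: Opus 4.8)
The statement to prove is that for each $\ve>0$ there is a $\tau_0>0$ such that the Riemannian convolution $-\widetilde{V}_\tau$ is $(c-\ve)$-convex on $O_2$ for all $\tau\in(0,\tau_0)$. The plan is to simply assemble this from the two immediately preceding results, invoked with the correct choice of parameters. First I would fix $\ve>0$ once and for all. By Lemma~\ref{lem-convex}, the function $-V$ is $(c-\eta)$-convex on $O_3$ for every $\eta>0$; in particular, taking $\eta=\ve$, the function $-V$ is $(c-\ve)$-convex on $O_3$. This handles the hypothesis needed for the approximation statement.

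Next I would apply Proposition~\ref{prop-appro} with $M=O_3$ (an open Riemannian manifold, to which the Greene--Wu approximation applies), $f=-V$, $\xi=c-\ve$, and the compact set $K=\overline{O_2}$. Recall that $\overline{O_2}\subset O_3$ by the chain of inclusions $O_1\subset\subset O_2\subset\subset O_3\subset\subset\O$, so $K$ is indeed a compact subset of $O_3$, and by the previous paragraph $-V$ is $\xi$-convex there. Proposition~\ref{prop-appro} then provides a neighborhood $W$ of $\overline{O_2}$ and a $\tau_0>0$ such that for all $\tau\in(0,\tau_0)$ the Riemannian convolution $\widetilde{(-V)}_\tau = -\widetilde{V}_\tau$ (the convolution operator is linear, so it commutes with the sign) is $(c-\ve)$-convex on $W$. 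Since $O_2\subset\overline{O_2}\subset W$, restricting to $O_2$ gives the claim. I would also shrink $\tau_0$ if necessary so that $\tau_0<\mathrm{dist}(O_3,\S)$, which is the standing assumption under which $\widetilde{V}_\tau$ is well-defined and smooth on $O_3\supset O_2$; this poses no difficulty since it is just a further constraint on the threshold.

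The only genuine point requiring a word of care—and this is the part I would expect a referee to scrutinize—is the compatibility of the domains on which the two ingredients are stated: Lemma~\ref{lem-convex} is formulated on $O_3$, whereas Proposition~\ref{prop-appro} is a general statement about a Riemannian manifold $M$ and a compact subset $K\subset M$. One takes $M$ to be the open set $O_3$ (with the restricted metric $g$), which is a bona fide Riemannian manifold, and $K=\overline{O_2}$, compact and contained in $M$; the Riemannian convolution of $-V$ computed within $M=O_3$ agrees with the $\widetilde{V}_\tau$ introduced earlier, because the defining integral over $v\in T_xM$ with $|v|<\tau$ only sees points $\exp_x(v)$ within distance $\tau$ of $x$, and for $x$ near $\overline{O_2}$ and $\tau<\tau_0$ small these all lie inside $O_3$. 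Everything else is a direct citation, so no substantial obstacle remains beyond bookkeeping the nested neighborhoods and the threshold $\tau_0$.
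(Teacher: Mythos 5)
Your proposal is correct and matches the paper's argument exactly: the lemma is obtained by applying Lemma~\ref{lem-convex} with $\eta=\ve$ to get $(c-\ve)$-convexity of $-V$ on $O_3$, and then invoking Proposition~\ref{prop-appro} with $K=\overline{O_2}$. The extra bookkeeping you supply (linearity of the convolution, agreement of the convolution on $O_3$ with the earlier definition, shrinking $\tau_0$ below $\mathrm{dist}(O_3,\S)$) is consistent with, and slightly more careful than, what the paper records.
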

In particular, we get
\begin{equation*}
\n^2{(-\widetilde{V}_\tau)}|_x(X,X)=C(-\widetilde{V}_\tau)(x;X)\geq c-\ve
\end{equation*}
for $x\in O_{2}$ and any unit $X\in T_x\Omega$, provided that $\tau\in (0,\tau_0)$.

Next by a gluing procedure as in  \cite{Gho02} thanks to Ghomi, we can construct the desired function $V_\ve$ in Proposition \ref{prop-smoothing}. More precisely, let $\phi$ be a smooth nonnegative cut-off function such that $\mathrm{supp}\; \phi \subset O_{2}$ and $\phi\equiv 1$ on $O_1$ and define
\begin{equation}
V_\tau=\phi \widetilde{V}_\tau+(1-\phi)V,
\end{equation}
which gives us a smooth function on $\O$.

We claim that $V_\tau$ satisfies all the requirements in Proposition~\ref{prop-smoothing} when $\tau$ is small enough. In fact, on $\O\setminus O_{2}$ we have $V_\tau=V$. On $O_1$, we have $V_\tau=\widetilde{V}_\tau$, and
\begin{equation*}
\n^2(-V_\tau)\geq  (c-\ve)g.
\end{equation*}
Lastly consider $V_\tau$ on $\overline{O_{2}}\setminus O_1$.
Since $V$ is smooth on $\O\setminus {\rm Cut}(\S)$, by Lemma~3 (3) in \cite{GW76}, we see
\begin{equation*}
\lim_{\tau\rightarrow 0}\|\widetilde{V}_\tau-V\|_{C^2(\overline{O_2}\setminus O_1)}=0.
\end{equation*}

Note that $V_\tau-V=\phi\cdot (\widetilde{V}_\tau-V)$. So for $\ve>0$, there exists $\tau(\ve)>0$ such that
\begin{equation*}
\n^2(-V_{\tau(\ve)})(X,X) \ge \n^2(-V)(X,X)-\ve \geq c-\ve
\end{equation*}
for $x\in \overline{O_{2}}\setminus O_1$ and any unit vector $X\in T_x \O$.

We write simply $V_\ve=V_{\tau(\ve)}$. Finally, since $V>0$ on $\O$, by noting that the Riemannian convolution and the gluing procedure always keep the positivity, we see $V_\ve\ge 0$. The proof of Proposition~\ref{prop-smoothing} is completed.

\

\section{Proofs of Theorems~\ref{thm1} and \ref{thm2}}\label{sec4}

We shall prove the following two key  inequalities for harmonic functions on $\O$.
\begin{prop}
Let $(\O, g)$ be as in Theorem \ref{thm1}. Let $f$ be a harmonic function, i.e., $\De f=0$  on $\O$. Then we have
\begin{eqnarray}
&&\qquad \int_\S (\pt_\nu f)^2 da\ge c\int_\O |\n f|^2 dv,\label{key-ineq1}\\
&&\int_\S |\n_\S f|^2 da \ge (n-1)c\int_\O |\n f|^2 dv.\label{key-ineq2}
\end{eqnarray}
\end{prop}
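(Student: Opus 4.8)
The plan is to combine the weighted Reilly formula \eqref{xeq0} and the Pohozaev identity \eqref{poho}, both applied with the smoothed weight $V_\ve$ from Proposition~\ref{prop-smoothing} and the vector field $X=\n V_\ve$, and then send $\ve\to 0$. Since $f$ is harmonic, the left-hand side of \eqref{xeq0} becomes $-\int_\O V_\ve|\n^2 f|^2\,dv\le 0$, and on the boundary we may use $V_\ve=V=0$ on $\S$ (because $\rho=0$ there and $V(\rho)=\rho-\tfrac c2\rho^2$), which kills all boundary terms involving $V_\ve$ itself. What survives on the boundary is $\int_\S \pt_\nu V_\ve\,|\n_\S f|^2\,da$; near $\S$ we have $V_\ve=V$ so $\n V_\ve=\n V=V'(\rho)\n\rho=(1-c\rho)(-\nu)$, giving $\pt_\nu V_\ve=-V'(0)=-1$ on $\S$. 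Hence \eqref{xeq0} reads, after dropping a nonnegative term,
\begin{equation*}
0\ge -\int_\S |\n_\S f|^2\,da+\int_\O\bigl(\n^2 V_\ve-\De V_\ve\,g+V_\ve\,\mathrm{Ric}_g\bigr)(\n f,\n f)\,dv.
\end{equation*}
The curvature condition gives $\mathrm{Ric}_g\ge 0$, and $V_\ve\ge 0$, so the $V_\ve\,\mathrm{Ric}_g$ term is nonnegative; the remaining ingredient is to control $\n^2 V_\ve-\De V_\ve\,g$ from below using Proposition~\ref{prop-smoothing}, which tells us $\n^2 V_\ve\le -(c-\ve)g$, hence $\De V_\ve\le -(n-1)(c-\ve)$ so $-\De V_\ve\,g\ge (n-1)(c-\ve)g$ and therefore $\n^2 V_\ve-\De V_\ve\,g\ge (n-2)(c-\ve)g$ as a quadratic form. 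This yields $\int_\S|\n_\S f|^2\,da\ge (n-2)(c-\ve)\int_\O|\n f|^2\,dv$, which is almost \eqref{key-ineq2} but off by a factor.

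To recover the missing curvature from \eqref{key-ineq2} and to get \eqref{key-ineq1}, I would feed in the Pohozaev identity \eqref{poho} with $X=\n V_\ve$. There $\<\n_{\n f}\n V_\ve,\n f\>=\n^2 V_\ve(\n f,\n f)$ and $\mathrm{div}_g\n V_\ve=\De V_\ve$, while on the boundary $\<X,\nu\>=\pt_\nu V_\ve=-1$ and $\<X,\n f\>=\pt_\nu V_\ve\,\pt_\nu f+\<\n_\S V,\n_\S f\>=-\pt_\nu f$ (the tangential part of $\n V$ on $\S$ vanishes since $V$ is constant on $\S$). Thus \eqref{poho} becomes
\begin{equation*}
\int_\O\Bigl(\n^2 V_\ve(\n f,\n f)-\tfrac12|\n f|^2\De V_\ve\Bigr)dv=\int_\S\Bigl(-(\pt_\nu f)^2+\tfrac12|\n_\S f|^2+\tfrac12(\pt_\nu f)^2\Bigr)da,
\end{equation*}
using $|\n f|^2=(\pt_\nu f)^2+|\n_\S f|^2$ on $\S$. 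The left side is $\ge\int_\O\bigl(-(c-\ve)+\tfrac{n-1}2(c-\ve)\bigr)|\n f|^2\,dv=\tfrac{n-3}2(c-\ve)\int_\O|\n f|^2\,dv$, which gives a relation among $\int_\S(\pt_\nu f)^2$, $\int_\S|\n_\S f|^2$ and $\int_\O|\n f|^2$. Combining this with the Reilly-derived inequality above (a linear combination, exploiting that the two boundary integrands $(\pt_\nu f)^2$ and $|\n_\S f|^2$ appear with different coefficients in the two estimates) should let me solve for each of the two desired inequalities separately. One also uses the standard fact that $\int_\S\pt_\nu f\,f\,da=\int_\O|\n f|^2\,dv$ for harmonic $f$, and possibly $\int_\S\pt_\nu f\,da=0$, to close the system; finally let $\ve\to0$.

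The main obstacle I anticipate is bookkeeping the boundary terms correctly and making sure the linear combination of the Reilly inequality and the Pohozaev identity is chosen so that the weight-dependent interior terms collapse to a clean multiple of $\int_\O|\n f|^2$ with the right constants $(n-1)c$ in \eqref{key-ineq2} and $c$ in \eqref{key-ineq1}; in particular the coefficients $(n-2)$ from Reilly and $(n-3)/2$, $-1$, $(n-1)/2$ from Pohozaev must conspire exactly. A secondary technical point is the justification of applying \eqref{xeq0} and \eqref{poho} to $V_\ve$ rather than $V$ and then passing to the limit: since $V_\ve=V$ outside the fixed neighborhood $\cal C$ of $\mathrm{Cut}(\S)$, and $|\cal C|$ can be taken small while the inequality $\n^2(-V_\ve)\ge(c-\ve)g$ holds everywhere, the interior integrals converge (using that $\n f\in L^2$ and the Hessian bound is uniform), and the boundary integrals are unaffected because $V_\ve\equiv V$ near $\S$; I would remark on this rather than belabor it.
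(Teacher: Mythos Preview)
Your overall strategy---apply the weighted Reilly formula \eqref{xeq0} and the Pohozaev identity \eqref{poho} with the smooth weight $V_\ve$, use $V_\ve|_\S=0$ and $\pt_\nu V_\ve|_\S=-1$, and let $\ve\to0$---is exactly the paper's. Your boundary computations are correct. The gap is in how you estimate the interior terms: you try to bound $\n^2 V_\ve$ and $\De V_\ve$ \emph{separately}, but Proposition~\ref{prop-smoothing} gives only an \emph{upper} bound $\n^2 V_\ve\le -(c-\ve)g$, never a lower bound on $\n^2 V_\ve$ alone. Thus the step ``$\n^2 V_\ve-\De V_\ve\,g\ge (n-2)(c-\ve)g$'' is not justified (you implicitly used $\n^2 V_\ve\ge -(c-\ve)g$), and likewise your lower bound on the left side of the Pohozaev identity is invalid for the same reason. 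This is why your constants come out wrong and why the ``linear combination'' you hope for cannot be found from the two \emph{inequalities} you wrote.

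Both issues disappear once you keep the two formulas as \emph{identities} and bound only at the very end. For \eqref{key-ineq2}, note that for a unit vector $X$,
\[
(\n^2 V_\ve-\De V_\ve\,g)(X,X)=\n^2 V_\ve(X,X)-{\rm tr}\,\n^2 V_\ve=-{\rm tr}_{X^\perp}\n^2 V_\ve\ge (n-1)(c-\ve),
\]
since each diagonal entry of $\n^2 V_\ve$ on $X^\perp$ is $\le -(c-\ve)$. Plugging this into the Reilly identity (before dropping $V_\ve|\n^2 f|^2$ and $V_\ve\,{\rm Ric}_g$) gives \eqref{key-ineq2} directly with the correct constant $(n-1)c$; no Pohozaev is needed here. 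For \eqref{key-ineq1}, write both Reilly and Pohozaev with $\int_\S|\n_\S f|^2\,da$ isolated and equate the two; the $\De V_\ve$ terms cancel and one copy of $\n^2 V_\ve$ cancels, leaving the exact identity
\[
\int_\S(\pt_\nu f)^2\,da=\int_\O\Big(-\n^2 V_\ve(\n f,\n f)+V_\ve|\n^2 f|^2+V_\ve\,{\rm Ric}_g(\n f,\n f)\Big)\,dv,
\]
and now every term on the right has a sign: $-\n^2 V_\ve\ge (c-\ve)g$, $V_\ve\ge0$, ${\rm Ric}_g\ge0$. This yields \eqref{key-ineq1} with the correct constant $c$. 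The approximation/limit remarks you made are fine.
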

\begin{proof}
By our construction of $V_\ve$, we have
\begin{align}\label{info-bdry}
&&V_\ve|_{\S}=V|_\S=0\hbox{ and }\n_\nu {V}_\ve|_{\S}=\n_\nu V|_\S=-(1-c\rho)|_{\S}= -1.
\end{align}
By the weighted Reilly-type formula \eqref{xeq0} applied to ${V}_\ve$ and the boundary information \eqref{info-bdry}, we get
\begin{align}\label{eq-Reilly1}
-\int_\O V_\ve |\n^2 f|^2 dv
= -\int_\S |\n_\S f|^2da +\int_\O \left(\n^2 V_\ve -\Delta V_\ve g+V_\ve \mathrm{Ric}_g\right)(\n f, \n f)dv.
\end{align}
On the other hand, by the Pohozaev identity \eqref{poho} applied to $X=\n V_\ve$ and the boundary information \eqref{info-bdry} again, we obtain
\begin{align}\label{eq-Pohozaev1}
\int_\S |\n_\S f|^2  da=\int_\S (\p_\nu f)^2 da +\int_\O (2\n^2 V_\ve-\Delta V_\ve g)(\n f, \n f) dv.
\end{align}
Combining \eqref{eq-Reilly1} and \eqref{eq-Pohozaev1}, we have
\begin{align}\label{comb}
\int_\S (\p_\nu f)^2 da&=\int_\O \left(-\n^2 V_\ve(\n f,\n f)+ V_\ve |\n^2 f|^2+ V_\ve \mathrm{Ric} (\n f, \n f)\right)dv.
\end{align}
By the curvature condition \eqref{curv-cond} and Proposition \ref{prop-smoothing}, we deduce
\begin{equation*}
\int_\S (\p_\nu f)^2 da\geq (c-\ve)\int_\O |\n f|^2dv.
\end{equation*}
By letting $\ve\to 0$, we get \eqref{key-ineq1}.

For \eqref{key-ineq2}, we need only to look at \eqref{eq-Reilly1}. Because of $\n^2 V_\ve\le -(c-\ve)g$, we deduce that
$$\n^2 V_\ve -\Delta V_\ve  g\ge (n-1)(c-\ve)g.$$
It follows from \eqref{eq-Reilly1} that
\begin{align*}
\int_\S |\n_\S f|^2da \ge (n-1)(c-\ve)\int_\O |\n f|^2 dv.
\end{align*}
By letting $\ve\to 0$, we get \eqref{key-ineq2}.
\end{proof}

\noindent{\it Proof of Theorem \ref{thm1}.}
Let $f$ be a Steklov eigenfunction corresponding to $\sigma_1$. Then we have
\begin{eqnarray}
&&\int_\S (\pt_\nu f)^2 da=\sigma_1^2\int_\S f^2 da,\label{xxeq1}\\
&&\int_\O |\n f|^2 dv= \sigma_1 \int_\S f^2da.\label{xxeq2}
\end{eqnarray}
Combining the above two identities with \eqref{key-ineq1}, we get $$\s_1\ge c.$$

%\begin{equation*}
%\sigma_1^2\int_\S f^2 da\geq (c-\ve)\sigma_1 \int_\S f^2da,
%\end{equation*}
%which yields
%\begin{equation*}
%\sigma_1\geq c-\ve.
%\end{equation*}
%Letting $\ve \rightarrow 0$, we finish the proof of the inequality part of Theorem~\ref{thm1}.

Next we consider the case $\sigma_1=c$. First we have the following observation.
\begin{prop}\label{lem-rigidity}
If $\sigma_1=c$, then
\begin{align}\label{eq-rigidity}
\n^2 f&=0,\quad \mathrm{Ric}_g(\n f,\n f)=0\hbox{ on }\O.%,\quad \n^2 V (\n f,\n f)=-c|\n f|^2,
\end{align}
%on $\O\setminus {\rm Cut}(\S)$. Moreover, the first two equalities hold on $\O$ by continuity.
\end{prop}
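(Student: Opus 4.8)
The plan is to trace the equality case $\sigma_1 = c$ back through the chain of inequalities established in the proof, forcing every intermediate inequality to be an equality. Let $f$ be a Steklov eigenfunction for $\sigma_1 = c$, so that \eqref{xxeq1} and \eqref{xxeq2} hold with $\sigma_1 = c$. Combining these with \eqref{key-ineq1} gives $\int_\S(\pt_\nu f)^2\,da = c\int_\O|\n f|^2\,dv$, i.e. the inequality \eqref{key-ineq1} is saturated. Now I would recall that \eqref{key-ineq1} was obtained from the identity \eqref{comb},
\begin{align*}
\int_\S (\p_\nu f)^2\, da = \int_\O \left(-\n^2 V_\ve(\n f,\n f) + V_\ve |\n^2 f|^2 + V_\ve\,\mathrm{Ric}_g(\n f,\n f)\right)dv,
\end{align*}
together with the three pointwise facts $-\n^2 V_\ve \ge (c-\ve)g$ (Proposition~\ref{prop-smoothing}), $V_\ve \ge 0$, and $\mathrm{Ric}_g \ge 0$ (which follows from $\mathrm{Sect}_g\ge 0$). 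The saturation forces, after letting $\ve\to 0$, that each of the three nonnegative contributions to the gap must vanish in the limit.

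The main technical point is to handle the $\ve$-dependence carefully, since $V_\ve$ varies with $\ve$ and is only controlled on the fixed neighborhood $\cal C$ of the cut locus. Here I would use that $V_\ve = V$ on $\O\setminus\cal C$ for every $\ve$, and that $\cal C$ can be taken to have arbitrarily small measure (as ${\rm Cut}(\S)$ has zero $n$-dimensional Hausdorff measure); alternatively, one can simply observe that on $\O\setminus\cal C$ the integrand $-\n^2 V(\n f,\n f) + V|\n^2 f|^2 + V\,\mathrm{Ric}_g(\n f,\n f) - c|\n f|^2 \ge V|\n^2 f|^2 + V\,\mathrm{Ric}_g(\n f,\n f) \ge 0$ pointwise, using $-\n^2 V \ge cg$ there. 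Since the left side of \eqref{comb} equals $c\int_\O |\n f|^2\,dv$ in the limit and the integrand is nonnegative off $\cal C$ (for $V$ itself) while the $\cal C$-contribution is $O(|\cal C|)$ uniformly as we shrink $\cal C$, we conclude $V|\n^2 f|^2 = 0$ and $V\,\mathrm{Ric}_g(\n f,\n f) = 0$ a.e. on $\O\setminus{\rm Cut}(\S)$, hence everywhere by continuity. Since $V = \rho - \tfrac c2\rho^2 = \rho(1 - \tfrac c2\rho) > 0$ on the interior of $\O$ (using $\rho_{\max}\le 1/c$, so $1-\tfrac c2\rho \ge \tfrac12 > 0$), we may divide by $V$ to obtain $\n^2 f \equiv 0$ and $\mathrm{Ric}_g(\n f,\n f) \equiv 0$ on $\O$.

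The step I expect to require the most care is making the limit $\ve\to 0$ rigorous without circular reasoning, since the equality $\int_\S(\pt_\nu f)^2\,da = c\int_\O|\n f|^2\,dv$ only emerges after the limit, whereas the identity \eqref{comb} holds for each fixed $\ve$ with an extra $\int_\cal C(\cdots)\,dv$ error term controlled via $-\n^2 V_\ve \ge (c-\ve)g$. The clean way around this is to fix a single small neighborhood $\cal C$, run the argument to get, for all $\ve$,
\begin{align*}
\int_\S(\pt_\nu f)^2\,da \ge (c-\ve)\int_\O |\n f|^2\,dv + \int_{\O\setminus\cal C}\left(V|\n^2 f|^2 + V\,\mathrm{Ric}_g(\n f,\n f)\right)dv,
\end{align*}
then let $\ve\to 0$ and invoke $\int_\S(\pt_\nu f)^2\,da = c\int_\O|\n f|^2\,dv$ to force the last integral to vanish; finally let $\cal C$ shrink. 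I would then record \eqref{eq-rigidity}, which sets up the subsequent rigidity analysis (presumably: $\n^2 f = 0$ makes $\n f$ parallel, so $|\n f|$ is constant and its integral curves are geodesics, leading via the boundary condition and $h \ge cg_\S$ to the identification of $\O$ with a Euclidean ball of radius $1/c$).
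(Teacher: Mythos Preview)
Your proposal is correct and follows essentially the same approach as the paper: trace equality back through \eqref{comb}, use the pointwise bounds $-\n^2 V_\ve \ge (c-\ve)g$, $V_\ve \ge 0$, $\mathrm{Ric}_g \ge 0$, pass to the limit to force $\int_\O V|\n^2 f|^2 + V\,\mathrm{Ric}_g(\n f,\n f)\,dv = 0$, and then divide by $V>0$ in the interior. The only cosmetic difference is in how the limit $\ve\to 0$ is handled: the paper invokes the uniform convergence $V_\ve \to V$ on $\O$ (via Greene--Wu's convergence result for the Riemannian convolution) to pass directly inside the integral, whereas you instead use $V_\ve = V$ on $\O\setminus\cal C$ and then shrink $\cal C$; both are valid and lead to the same conclusion.
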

\begin{proof}
Recall that $V_\ve$ is constructed by the Riemannian convolution and the gluing. By Lemma~3 (2) in Greene and Wu's \cite{GW76}, we know $$\widetilde{V}_\tau\to V\hbox{ uniformly on } \overline{O_2} \hbox{ as }\tau\to 0.$$Also $V_\ve=V$ on $\O\setminus  \overline{O_2}$.
Hence $$V_\ve\to V\hbox{ uniformly on } \O \hbox{ as }\ve\to 0.$$

Because $\sigma_1=c$, we see from \eqref{xxeq1}, \eqref{xxeq2} and \eqref{comb} that
\begin{eqnarray*}
&&c\int_\O |\n f|^2 dv=\int_\S (\p_\nu f)^2 da
\\& \ge& \int_\O \left((c-\ve)|\n f|^2+ V_\ve |\n^2 f|^2+ V_\ve \mathrm{Ric} (\n f, \n f)\right)dv.
\end{eqnarray*}
Letting $\ve\to 0$, we get
$$ \int_\O (V |\n^2 f|^2+V \mathrm{Ric} (\n f, \n f)) dv =0.$$
We get the conclusion.

%{\color{blue} {\bf To be cancelled.} Otherwise, e.g., assume $ |\n^2 f|^2>0$ at a point $x\in \O\setminus {\rm Cut}(\S)$. Then $ |\n^2 f|^2>0$ on a small neighborhood $U\subset \O\setminus {\rm Cut}(\S)$ of $x$. Choose $\cal{C}$ small such that $U\subset \O\setminus \cal{C}$. For such $\cal{C}$, we construct $V_\ve$ as in Proposition~\ref{prop-smoothing}. Then we get
%\begin{align*}
%\sigma_1^2 \int_\S f^2 da \geq (c-\ve)\sigma_1 \int_\S f^2 da+\int_U V |\n^2 f|^2 dv,
%\end{align*}
%which after taking $\ve \rightarrow 0$ leads to
%\begin{align*}
%c^2 \int_\S f^2 da \geq c^2 \int_\S f^2 da+\int_U V |\n^2 f|^2 dv> c^2\int_\S f^2 da,
%\end{align*}
%a contradiction.

%The other two equalities can be proved analogously.}
\end{proof}

By Proposition~\ref{lem-rigidity}, the nontrivial function $f$ satisfies
\begin{equation*}
\n^2 f=0 \hbox{ in }\O, \quad \p_\nu f=cf\hbox{ on } \S.
\end{equation*}
Then we may apply Theorem~19 in \cite{RS12} to complete the proof of the equality part of Theorem~\ref{thm1}. Alternatively, here we provide a different argument Proposition~\ref{lem-rigidity1}, which is of independent interest. More importantly, our Proposition~\ref{lem-rigidity1} requires weaker assumptions. The idea of the proof of Proposition~\ref{lem-rigidity1} is due to Ben~Andrews. We are deeply grateful to him for suggesting the proof here.
\begin{prop}\label{lem-rigidity1}
Let $(\O, g)$ be an $n$-dimensional compact Riemannian manifold with boundary $\S$ such that  $\mathrm{Ric}_g\ge 0$ in $\O$ and $H\ge (n-1)c$ along $\S$. Assume there exists a nontrivial smooth function $f$ satisfying
\begin{equation*}
\n^2 f=0 \hbox{ in }\O, \quad \p_\nu f=cf\hbox{ on } \S.
\end{equation*}
Then $\O$ is isometric to  a Euclidean ball with radius $1/c$.
\end{prop}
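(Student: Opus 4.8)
The plan is to exploit the existence of a nontrivial function $f$ with $\n^2 f=0$ to split off a $\R$-factor and identify $\O$ as a warped-type product, then use the boundary condition $\p_\nu f=cf$ together with $H\ge(n-1)c$ to pin down the geometry. First I would observe that $\n^2 f=0$ forces $|\n f|^2$ to be constant on $\O$: indeed $\n|\n f|^2=2\n^2 f(\n f,\cdot)=0$. If $\n f\equiv 0$ then $f$ is constant, and the boundary condition $0=\p_\nu f=cf$ with $c>0$ gives $f\equiv 0$, contradicting nontriviality; so $|\n f|\equiv a$ for some constant $a>0$. After rescaling we may take $a=1$, so $N:=\n f$ is a parallel unit vector field on $\O$.

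Next, since $N$ is parallel and of unit length, the function $f$ has no critical points, its level sets $\{f=s\}$ are totally geodesic hypersurfaces, and the flow of $N$ gives a local isometric splitting: near any interior point $\O$ is isometric to a product $(s_0-\delta,s_0+\delta)\times L_{s_0}$ with $f$ the first coordinate. The key global step is to run this along $\O$ up to the boundary: on $\S$ we have $\p_\nu f=\la N,\nu\ra=cf$, and since $|N|=|\nu|=1$ this forces $|cf|\le 1$ on $\S$, i.e. $|f|\le 1/c$ on the boundary, with $\la N,\nu\ra=cf$ meaning that $N$ makes a definite angle with $\nu$ determined by $f$. I would then show $f$ attains the extreme values $\pm 1/c$ at points where $N=\pm\nu$, that these are the max and min of $f$ on all of $\O$, and that the level set $\{f=0\}$ is a smooth hypersurface meeting $\S$. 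Integrating the identity $\De f=0$ (which is automatic from $\n^2 f=0$) and using the flow of $N$, the whole manifold is swept out, and the boundary constraint $\la N,\nu\ra = cf$ becomes an ODE for how $\S$ sits relative to the product coordinates; combined with $H\ge (n-1)c$ and $\mathrm{Ric}_g\ge 0$ (hence each level-set factor has a lower bound on its own geometry via Gauss equations, and the warping is trivial because $N$ is parallel), this should pin down $\S$ as a round sphere of radius $1/c$ and $\O$ as the ball it bounds inside the flat product $\R\times L$ with $L$ forced to be a point... more precisely, a computation of the second fundamental form of $\S$ in terms of $f$ and $\la N,\nu\ra$ should give $h=c\,g_\S$ exactly, so $\S$ is totally umbilic with constant principal curvature $c$, and then a classical argument (e.g. that the position-type vector field $cf\cdot(\text{something}) $ is conformal Killing, or directly that $\O$ is a space form ball) finishes it.

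Concretely, I expect the cleanest route is: from $\n^2 f=0$ define $u=\frac{1}{2c}-\frac{c}{2}f^2$ (or a similar quadratic in $f$) and compute $\n^2 u = -c\,\n f\otimes\n f$, which has rank one; better, consider $w = c f$ so $|\n w|^2 = c^2$ and examine the vector field $Y=\n f$ restricted to $\S$. On $\S$, decompose $N=\n f = (cf)\nu + N^\top$ where $N^\top$ is tangent to $\S$; then $|N^\top|^2 = 1-c^2f^2\ge 0$. Differentiating $cf=\la N,\nu\ra$ along $\S$ and using $\n N=0$ gives $c\,\n_\S f = h(N^\top,\cdot)$, i.e. $c\,N^\top = h(N^\top,\cdot)$ as one-forms (using $\n_\S f = N^\top$), so $N^\top$ is an eigenvector of $h$ with eigenvalue exactly $c$ wherever $N^\top\ne 0$. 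Since $h\ge cg_\S$ and $\mathrm{tr}_{g_\S}h=H\ge(n-1)c$, one deduces by a trace/rigidity argument that $h\equiv c\,g_\S$ on all of $\S$ and $\mathrm{Ric}_g\equiv 0$ on $\O$ (the latter feeding back into a Bochner/Reilly argument, or directly: a flat-Ricci manifold with totally umbilic boundary of constant curvature $c$ and such a parallel field is the round ball). Then $\S$ is totally umbilic with $h=cg_\S$, hence (by the structure of $\O$ as carrying a parallel vector field, so locally $\R\times L$, and $\S\subset\R\times L$ totally umbilic of constant principal curvature) $L$ must be flat and $\S$ a metric sphere of radius $1/c$, and $\O$ the ball it bounds, completing the proof.

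\textbf{Main obstacle.} The hard part will be the global argument: passing from the local splitting induced by the parallel field $N=\n f$ to the conclusion that $\O$ is \emph{exactly} a Euclidean ball (rather than, say, a product of a ball with a flat factor, or a manifold with a more complicated level-set geometry), using only $\mathrm{Ric}_g\ge 0$ and $H\ge (n-1)c$. The umbilicity computation $c\,N^\top=h(N^\top,\cdot)$ only controls $h$ in the $N^\top$-direction and only where $N^\top\ne0$ (it degenerates precisely at the poles $f=\pm 1/c$), so upgrading this to $h\equiv c\,g_\S$ everywhere, and simultaneously forcing $\mathrm{Ric}_g\equiv 0$ and the flatness of the cross-section, is where the real work lies — I anticipate this needs either a second application of the Reilly-type formula to an auxiliary function like $f^2$, or Ben Andrews's observation exploiting that $\frac{1}{c}-cf^2$-type quantities combined with the curvature bounds leave no room, as in the known rigidity statements for Obata-type equations with boundary (cf. Reilly, Raulot--Savo).
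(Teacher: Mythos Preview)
Your setup is correct: $|\nabla f|$ is constant (normalize to $1$), $N=\nabla f$ is parallel, the level sets are totally geodesic, and there is a product structure $M_0\times\mathbb{R}$. Your computation that $N^\top$ is an eigenvector of $h$ with eigenvalue exactly $c$ wherever $N^\top\neq 0$ is also correct and appears in the paper (as $h_{11}=c$, $h_{1i}=0$ at points of $\partial M_0$).

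The gap is at the step ``Since $h\ge cg_\Sigma$ and $\mathrm{tr}_{g_\Sigma}h=H\ge(n-1)c$, one deduces by a trace/rigidity argument that $h\equiv cg_\Sigma$.'' The hypothesis of the Proposition is only $H\ge(n-1)c$; there is \emph{no} assumption $h\ge cg_\Sigma$, so you are invoking something you do not have. And even granting $h\ge cg_\Sigma$, knowing one eigenvalue of $h$ equals $c$ does not force the remaining $n-2$ eigenvalues to equal $c$; no trace argument closes this. You flag this correctly as the main obstacle, but the proposed remedies (Reilly applied to $f^2$, an unspecified Obata-type observation) do not supply the missing idea.

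The paper never proves $h\equiv cg_\Sigma$. Instead it reduces to the cross-section $M_0=\{f=0\}$: it writes $\Sigma_\pm$ as graphs $u_\pm$ over $M_0$, and the boundary condition $\partial_\nu f=cf$ becomes $(1+|\nabla_{M_0}u_+|^2)^{-1/2}=cu_+$. Setting $v=\frac{1}{c}\sqrt{1-c^2u_+^2}$ gives $|\nabla_{M_0}v|=1$, so $v$ behaves like the distance from its (unique) zero $x_0$, yielding $\mathrm{dist}(x_0,\partial M_0)\ge 1/c$. Meanwhile $R_{1ijk}=0$ (from parallelism of $N$) plus Gauss gives $\mathrm{Ric}^{M_0}\ge 0$, and your eigenvalue computation gives $H_{\partial M_0}=H-c\ge(n-2)c$. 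Rigidity then comes from Li's sharp inradius theorem \cite{Li14}: a compact $(n{-}1)$-manifold with nonnegative Ricci and boundary mean curvature $\ge(n-2)c$ has inradius $\le 1/c$, with equality only for the Euclidean ball. This pins down $M_0$ as that ball, after which $v(x)=|x-x_0|$ and $\Sigma_\pm$ are hemispheres. The idea you are missing is this dimensional reduction together with the external inradius-rigidity input, rather than a direct attack on the umbilicity of $\Sigma$.
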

\begin{proof}
Since the result for the case $n=2$ has been proved by Escobar \cite{Esc97} (see pages 548--549 there), we consider the case $n\ge 3$ in the following. Note $\nabla^2 f=0$. So $|\nabla f|$ is constant on $\O$. Without loss of generality, assume $|\nabla f|=1$. Then we can check that any level set
\begin{equation*}
M_t:=\{x\in \O:f(x)=t\}
\end{equation*}
in $\O$ is totally geodesic with unit normal $\nabla f$. Moreover, $\nabla f$ is a global Killing vector field. Therefore, $\O$ is of a warped product structure. In other words, $\Omega$ is contained in a Riemannian direct product
\begin{equation}
\widehat{M}:=M_0\times \R,
\end{equation}
with $f=t$ being the coordinate for $\R$. In particular, in the coordinate of $M_0\times \R$, $\n f=(0, 1)$. Compare Brinkmann's result in 1925; see Theorem~4.3.3 in \cite{Pet16}.

Next, since $\int_\S f da=0$, we know $f$ changes sign on $\S$ and in turn $M_0$ decomposes the boundary $\S$ of $\O$ into two parts, the upper part $\S_+=\{x\in \S: f(x)\geq 0\}$ and the lower part $\S_-=\{x\in \S: f(x)\leq 0\}$.

Let $\tilde{M_0}$ be a connected component of $M_0$.
By virtue of  the fact $\n f=(0,1)$ and the boundary condition $\pt_\nu f=cf$, the connected components $\tilde{\S}_\pm$ can be written as two graphs over $\tilde{M_0}$, i.e.,
\begin{align*}
\tilde{\S}_\pm&:=\{(x,u_\pm(x)):x\in \tilde{M_0}\},
\end{align*}
where $u_\pm: \tilde{M_0}\rightarrow \R$ are the corresponding graph functions. Note that $u_+\geq 0$ and $u_-\leq 0$.

First let us focus on $\tilde{\S}_+$. A standard computation shows that the outer unit normal $\nu$ of $\tilde{\S}_+$ reads
\begin{equation*}
\nu=\frac{(-\nabla_{\tilde{M_0}} u_+,1)}{\sqrt{|\nabla_{\tilde{M_0}} u_+|^2+1}}.
\end{equation*}
Then $\pt _\nu f=cf$ and $\nabla f=(0,1)$ gives us
\begin{equation}\label{eq-graph}
\frac{1}{\sqrt{|\nabla_{\tilde{M_0}} u_+|^2+1}}=cu_+.
\end{equation}
%From here it is worth noting that $u_-(x)=-u_+(x)$ since they satisfy the same equation with a sign difference.

Moreover, from \eqref{eq-graph} we observe some properties on the graph function $u_+$. First, $u_+$ is continuous on $\tilde{M_0}$ and has the range $[0,1/c]$ with $u_+|_{\pt \tilde{M_0}}=0$. Second, $u_+$ is smooth away from $\pt \tilde{M_0}$, since the boundary $\S$ is smooth. Third, the set $A:=\{x\in \tilde{M_0}:u_+(x)=1/c\}$ is a compact set in $\tilde{M_0}$. % and satisfies the property (P): any minimizing geodesic $\gamma_0$ in $M_0$ connecting two points $x_1,x_2\in A$ lies entirely in $A$. To see this, let $x_1$ and $x_2$ be two points in $A$, and so $u_+(x_1)=u_+(x_2)=1/c$. Then connect the two points $(x_1,1/c)$ and $(x_2,1/c)$ in $\widehat{M}$ by a minimizing geodesic $\gamma$. Since the level set $M_{1/c}$ is totally geodesic, the geodesic $\gamma$ is contained in $M_{1/c}$. So any point in $\gamma$ has the height $1/c$, which is the highest height for the boundary $\S_+$. So $\gamma$ must be contained in the boundary $\S_+$, and then its projection $\gamma_0$ onto $M_0$ lies in $A$. Thus $A$ satisfies the property (P).

Using $u_+$, we define
\begin{equation}
v(x):=\frac{1}{c}\sqrt{1-c^2 u_+^2(x)},\quad x\in \tilde{M_0}.
\end{equation}
In view of the properties above on $u_+$, we have the corresponding ones for $v$. First, $v$ is continuous on $\tilde{M_0}$ and $v\in [0,1/c]$ with $v|_{\pt \tilde{M_0}}=1/c$. Second, $v$ is smooth at any $x$ with $v(x)\in (0,1/c)$. Third, $\{x\in \tilde{M_0}: v(x)=0\}=A$. (So $v$ is smooth on $\tilde{M_0}\setminus (\pt \tilde{M_0}\cup A)$.) Furthermore, by \eqref{eq-graph}, we have an important additional property $|\nabla_{\tilde{M_0}} v|=1$ on $\tilde{M_0}\setminus (\pt \tilde{M_0}\cup A)$.

Next let us study the level set of $v$. Define
\begin{equation*}
T_\tau:=\{x\in \tilde{M_0}:v(x)=\tau\},\quad \tau \in [0,1/c].
\end{equation*}
Note that $T_0=A$ and $T_{1/c}=\pt \tilde{M_0}$. First we claim
\begin{equation}\label{eq-claim}
\tau\leq {\rm dist}(T_0,T_\tau),\quad \tau \in [0,1/c].
\end{equation}
To prove the claim, fix any $\tau\in (0,1/c]$. Let $\gamma:[0,{\rm dist}(T_0,T_\tau)]\rightarrow \tilde{M_0}$ be the arc-length parametrized minimizing geodesic achieving the distance ${\rm dist}(T_0,T_\tau)$ with $\gamma(0)\in T_0$ and $\gamma({\rm dist}(T_0,T_\tau))\in T_\tau$. We can check that $\gamma$ minus the two end points has no intersections with $T_0\cup T_{1/c}$. So $v$ is smooth on $\gamma$ minus the two end points. Then we have
\begin{align*}
\tau&=\tau-0=\lim_{\ve\rightarrow 0+} v(\gamma(s))\big|_\ve^{{\rm dist}(T_0,T_\tau)-\ve}\\
&=\lim_{\ve\rightarrow 0+}\int_\ve^{{\rm dist}(T_0,T_\tau)-\ve} \frac{d}{ds}(v(\gamma(s)))ds\\
&= \lim_{\ve\rightarrow 0+} \int_\ve^{{\rm dist}(T_0,T_\tau)-\ve}\langle \nabla_{\tilde{M_0}} v,\gamma'\rangle ds\\
&\leq {\rm dist}(T_0,T_\tau),
\end{align*}
where we used $\langle \nabla_{\tilde{M_0}} v,\gamma'\rangle\leq |\nabla_{\tilde{M_0}} v|=1$. So we have proved the claim \eqref{eq-claim}. In particular, we have
\begin{equation}\label{eq-dist}
1/c\leq {\rm dist}(T_0,T_{1/c})={\rm dist}(A,\pt \tilde{M_0}).
\end{equation}

Now we intend to use the result in \cite{Li14} to conclude that $\tilde{M_0}$ is a Euclidean ball with radius $1/c$. We proceed as follows.

First, we set $e_1=\pt_t$ and take an orthonormal basis $\{e_i\}_{i=2}^{n}$ for $T\tilde{M_0}$. Then $\{e_i\}_{i=1}^{n}$ is an orthonormal basis for $T\widehat{M}$. Since $\tilde{M_0}$ is totally geodesic, by the Gauss equation, we know that the Riemannian curvature of $\tilde{M_0}$ satisfies
\begin{equation*}
R^{\tilde{M_0}}_{ijij}=R^{\widehat{M}}_{ijij},\quad 2\leq i,j\leq n.
\end{equation*}
On the other hand, by the Ricci identity, we obtain
\begin{equation*}
0=f_{ijk}-f_{ikj}=\sum_{p=1}^{n}f_pR^{\widehat{M}}_{pijk}, \quad 1\leq i,j,k\leq n,
\end{equation*}
which implies $R^{\widehat{M}}_{1ijk}=0$ for $1\leq i,j,k\leq n$. Therefore, we can deduce
\begin{equation}
\mathrm{Ric}^{\tilde{M_0}}(e_i,e_i)=\mathrm{Ric}^{\widehat{M}}(e_i,e_i)\geq 0, \quad 2\leq i\leq n.
\end{equation}

Second, we can prove that the second fundamental form of $\pt \tilde{M_0}$ in $\tilde{M_0}$, denoted by $h_{\p \tilde{M_0}}$, satisfies $h_{\p \tilde{M_0}}=h|_{\pt \tilde{M_0}}$. In fact, for any point $p\in \pt \tilde{M_0}$, by $\pt_\nu f(p)=cf(p)=0$, we know $\nabla f(p)=(0,1)\in T_p \S$. In a neighborhood of $p$ choose an orthonormal local frame $\{e_i\}_{i=1}^{n-1}$ of $\S$ such that $e_1(p)=\nabla f(p)=(0,1)$. Since $\tilde{M_0}$ is totally geodesic with constant unit normal $\nabla f$, we know that $\{e_i\}_{i=2}^{n-1}$ is an orthonormal local frame for $\pt \tilde{M}_0$ and $\nu$ is the unit outward normal of $\pt \tilde{M_0}$ in $\tilde{M_0}$. Moreover, for any $1\leq i\leq n-1$, we have
\begin{align*}
0&=\nabla^2 f(e_i,\nu)=e_i(\pt_\nu f)-\langle \nabla_{e_i}\nu,\nabla f\rangle=c e_i(f)-h_{ij}e_j(f),
\end{align*}
from which we get that $h_{11}=c$ and $h_{1i}=0$ for $2\leq i\leq n-1$. So $e_1$ is a principal direction of $\S$ at $p$ corresponding to the principal curvature $c$. Now noting again that $\tilde{M_0}$ is totally geodesic, we have
\begin{align*}
h_{\p \tilde{M_0}}(e_i,e_j)&=\langle \nabla^{\tilde{M_0}}_{e_i}e_j,\nu\rangle=\langle \nabla^{\widehat{M}}_{e_i}e_j,\nu\rangle=h(e_i,e_j),\quad 2\leq i,j\leq n-1,
\end{align*}
which is the claimed $h_{\p \tilde{M_0}}=h|_{\pt \tilde{M_0}}$. Thus we get
\begin{equation*}
H_{\pt \tilde{M_0}}=\mathrm{tr}_{g_{\pt \tilde{M_0}}} h_{\pt \tilde{M_0}}=\mathrm{tr}|_{g_{\pt \tilde{M_0}}} h=H-c\geq (n-2)c.
\end{equation*}

Now, since $\mathrm{Ric}^{\tilde{M_0}}\ge 0$ and $H_{\pt \tilde{M_0}}\ge (n-2)c$, we can use Theorem~1.1 in \cite{Li14} to conclude that
\begin{eqnarray}\label{M-Li}
\sup_{x\in \tilde{M_0}}{\rm dist}(x, \p \tilde{M_0})\le \frac{1}{c},
\end{eqnarray}
 with the equality holding if and only if $\tilde{M_0}$ is isometric to an $(n-1)$-dimensional Euclidean ball with radius $1/c$.  Combining it with \eqref{eq-dist}, we see the equality in \eqref{M-Li} holds true. Hence $\tilde{M_0}$ is an $(n-1)$-dimensional Euclidean ball with radius $1/c$ and centered at $A$, a single point set $\{x_0\}$, up to an isometry. Without loss of generality, we assume $\tilde{M_0}$ is exactly a Euclidean ball.

Next for $\tau\in (0,1/c)$, we prove that $T_\tau$ coincides with the geodesic sphere in $\tilde{M_0}$ with radius $\tau$ and centered at $x_0$, which is denoted by $S_\tau$. On the one hand, by \eqref{eq-claim}, we know that $T_\tau$ lies outside of $S_\tau$. On the other hand, by the similar argument as in the proof of \eqref{eq-claim}, we can prove $c^{-1}-\tau\leq {\rm dist}(T_\tau,T_{1/c})$. So $T_\tau$ lies inside of $S_\tau$. As a consequence, $T_\tau=S_\tau$ as desired. It follows that $v(x)=|x-x_0|$ and $u_+^2(x)+|x-x_0|^2=1/c^2$. Thus $\tilde{\S}_+$ is a hemisphere.

We can apply the same argument to $\tilde{\S}_-$ to conclude that $\tilde{\S}_-$ is also a hemisphere. Hence $\S$ is a union of disjoint Euclidean spheres.
Note from Remark \ref{rem-connect} that $\S$ is connected. Hence $\S$ must be one Euclidean sphere. We finish the proof of Proposition~\ref{lem-rigidity1} as well as the proof of Theorem~\ref{thm1}.
\end{proof}

\begin{rem}\label{key-rem}
We would like to call the reader's attention to a recent paper by Chen--Lai--Wang \cite{CLW19} where Obata-type theorems for general Robin boundary conditions were investigated.% In addition, we notice a remark after the proof of Theorem~19 in \cite{RS12}, which claims without proof that Proposition~\ref{lem-rigidity1} holds under the assumption that $\mathrm{Ric}_g\geq 0$, $h\geq cg_\S$ and $H$ is constant.
\end{rem}

\noindent{\it Proof of Theorem~\ref{thm2}.}
First we prove \eqref{upper}. Let $z$ be an eigenfunction corresponding to the first nonzero eigenvalue $\lambda_1$ of $\Delta_\S$ and $f$ be its harmonic extension to the interior of $\O$.
Then we have
\begin{eqnarray*}
\int_\S |\n_\S z|^2 da= \l_1 \int_\S z^2 da.
\end{eqnarray*}
Using \eqref{key-ineq2}, we have
\begin{eqnarray}\label{2-pf}
 \l_1 \int_\S z^2 da=\int_\S |\n_\S z|^2 da\ge (n-1)c\int_\O |\n  f|^2 dv.
\end{eqnarray}
On the other hand, by using the variational characterization \eqref{var-ch} for $\sigma_1$, we have
\begin{align}\label{2-pf1}
\s_1\le \frac{ \int_\O |\n f|^2dv}{\int_\S z^2da}.
\end{align}
Combining \eqref{2-pf} and \eqref{2-pf1}, we get the assertion $$\s_1\le \frac{\l_1}{(n-1)c}.$$
Now we consider the equality $\sigma_1=  \lambda_1/((n-1)c)$. By the above deduction, we know that
$f$ is indeed a Steklov eigenfunction corresponding to $\s_1$.

By the similar argument as in Proposition~\ref{lem-rigidity}, we have
%\begin{lem}
%When $\sigma_1= \frac{\lambda_1}{(n-1)c}$, we have
\begin{align*}
\n^2 f&=0,\quad \mathrm{Ric}_g(\n f,\n f)=0.%\quad (\n^2 V -\Delta V g)  (\n f,\n f)=(n-1)c|\n f|^2,
\end{align*}
Therefore, taking $\{e_i\}_{i=1}^{n-1}$ as a local orthonormal frame on $\S$, we have
\begin{equation*}
0=\sum_{i=1}^{n-1}\nabla^2 f(e_i,e_i)=\Delta_\Sigma f+H \pt_\nu f=-\lambda_1 f+H \sigma_1  f,
\end{equation*}
which together with $\lambda_1=  (n-1)c\sigma_1$ implies $H=(n-1)c$. Note $H\geq (n-1)c$. So we have $h=cg_\S$.

Next we compute for any $1\leq i\leq n-1$,
\begin{align*}
0&=\nabla^2 f(e_i,\nu)=e_i(\pt_\nu f)-\langle \nabla_{e_i}\nu,\nabla f\rangle=\sigma_1 e_i(f)-h_{ij}e_j(f).
\end{align*}
So we get $\sigma_1=c$. Then the proof reduces to that of Proposition~\ref{lem-rigidity1} and thus is complete.

\medskip

Lastly we prove \eqref{high}. Recall the min-max variational characterizations of the two eigenvalue problems, i.e.,
\begin{equation}
\sigma_j=\inf_{\substack{U\subset H^1(\Omega),\\dim\, U=j+1}}\sup_{\substack{ 0\neq u\in U,\\ \int_\Sigma u^2da=1}}\int_\Omega |\nabla u|^2 dv,
\end{equation}
for all $j\geq 0$, and
\begin{equation}\label{VCL}
\lambda_j=\inf_{\substack{U\subset H^1(\Sigma),\\dim\, U=j+1}}\sup_{ \substack{0\neq u\in U,\\ \int_\Sigma u^2da=1}}\int_\Sigma |\nabla_\Sigma u|^2 da,
\end{equation}
for all $j\geq 0$.

Let $\{\varphi_k\}_{k=0}^\infty$ be a complete orthonormal basis of $L^2(\S)$ such that $\varphi_k$ is an eigenfunction corresponding to $\l_k$ for $\Delta_\S$. For each $\varphi_k$, let $f_k$ be its harmonic extension to $\O$. Therefore by using \eqref{key-ineq2}, we obtain
\begin{align*}
\sigma_j&\leq \sup_{\sum_{k=0}^j a_k^2=1}\int_\O \left|\n \left(\sum_{k=0}^j a_kf_k\right)\right|^2dv\\
&\leq \frac{1}{(n-1)c} \sup_{\sum_{k=0}^j a_k^2=1}\int_\S \left|\n_\S \left(\sum_{k=0}^j a_k\varphi_k\right)\right|^2da\\
&=\frac{1}{(n-1)c} \sup_{\sum_{k=0}^j a_k^2=1}\sum_{k=0}^j a_k^2 \lambda_k\\
&\le\frac{\lambda_j}{(n-1)c}.
\end{align*}
The proof is completed.

\

\noindent{\bf Acknowledgements.} The authors would like to thank Ben~Andrews for valuable discussions. They would also like to thank Dr.~Linlin~Sun for pointing out an inaccurate statement on the comparison of our Theorem~\ref{thm2} and the known results in a previous version.

%=============================================================

\bibliographystyle{Plain}

\begin{thebibliography}{10}

%\bibitem{Bre12} Simon~Brendle, \emph{A sharp bound for the area of minimal surfaces in the unit ball}, Geom. Funct. Anal. \textbf{22} (2012), no.~3, 621--626.

%\bibitem{EI} A.~El Soufi and S.~Ilias, \emph{Immersions minimales, premi\`{e}re valeur propre du laplacien et volume conforme}, Math. Ann. \textbf{275} (1986), no.~2, 257--267.

%\bibitem{FS2} A.~Fraser and R.~Schoen, {\it Minimal surfaces and eigenvalue problems}, arXiv: 1304.0851.

%\bibitem{And94} Ben~Andrews, \emph{Contraction of convex hypersurfaces in Riemannian spaces}, J. Differential Geom. \textbf{39} (1994), no.~2, 407--431.

\bibitem{AC11} Ben~Andrews and Julie~Clutterbuck, \emph{Proof of the fundamental gap conjecture}, J. Amer. Math. Soc. \textbf{24} (2011), no.~3, 899--916.

%\bibitem{CEG11} Bruno~Colbois, Ahmad~El~Soufi and Alexandre~Girouard, \emph{Isoperimetric control of the Steklov spectrum}, J.~Funct.~Anal. \textbf{261} (2011), no.~5, 1384--1399.
\bibitem{CLW19}  X.~Chen, M.~Lai and F.~Wang, \emph{The Obata equation with Robin boundary condition},  Rev. Mat. Iberoam. \textbf{37} (2021), no.~2, 643--670.

\bibitem{CGH20} Bruno~Colbois, Alexandre~Girouard, and Asma~Hassannezhad, \emph{The Steklov and Laplacian spectra of Riemannian manifolds with boundary}, J. Funct. Anal. \textbf{278} (2020), no.~6, 108409, 38 pp.


\bibitem{Esc97} Jos\'{e}~F.~Escobar, \emph{The geometry of the first non-zero Stekloff eigenvalue}, J. Funct. Anal. \textbf{150} (1997), no.~2, 544--556.

\bibitem{Esc99} Jos\'{e}~F.~Escobar, \emph{An isoperimetric inequality and the first Steklov eigenvalue}, J. Funct. Anal. \textbf{165} (1999), no.~1, 101--116.

\bibitem{Gho02} Mohammad~Ghomi, \emph{The problem of optimal smoothing for convex functions}, Proc. Amer. Math. Soc. \textbf{130} (2002), no.~8, 2255--2259.

\bibitem{GKLP22} Alexandre~Girouard, Mikhail~Karpukhin, Michael~Levitin, and Iosif~Polterovich, \emph{The Dirichlet-to-Neumann map, the boundary Laplacian, and H\"{o}rmander's rediscovered manuscript}, J. Spectr. Theory \textbf{12} (2022), no.~1, 195--225.

\bibitem{GP17} A.~Girouard and I.~Polterovich, \emph{Spectral geometry of the Steklov problem}, J. Spectr. Theory \textbf{7} (2017), no.~2, 321--359.

\bibitem{GW72-73} R.~E.~Greene and H.~Wu, \emph{On the subharmonicity and plurisubharmonicity of geodesically convex functions}, Indiana Univ. Math. J. \textbf{22} (1972/73), 641--653.

\bibitem{GW76} R.~E.~Greene and H.~Wu, \emph{$C^\infty$ convex functions and manifolds of positive curvature}, Acta Math. \textbf{137} (1976), no.~3-4, 209--245.

\bibitem{GW79} R.~E.~Greene and H.~Wu, \emph{$C^\infty$ approximations of convex, subharmonic, and plurisubharmonic functions}, Ann. Sci. \'{E}cole Norm. Sup. (4) \textbf{12} (1979), no.~1, 47--84.

\bibitem{HK78} E.~Heintze and H.~Karcher, \emph{A general comparison theorem with applications to volume
estimates for submanifolds,} Ann. Sci. \'Ecole Norm. Sup. \textbf{11}(1978), 451--470.

\bibitem{Ich81} Ryosuke~Ichida, \emph{Riemannian manifolds with compact boundary}, Yokohama Math. J. \textbf{29} (1981), no.~2, 169--177.

\bibitem{IT01} Jin-ichi~Itoh and Minoru~Tanaka, \emph{The Lipschitz continuity of the distance function to the cut locus}, Trans. Amer. Math. Soc. \textbf{353} (2001), no.~1, 21--40.

\bibitem{Kar17} Mikhail~A.~Karpukhin, \emph{Bounds between Laplace and Steklov eigenvalues on nonnegatively curved manifolds}, Electron. Res. Announc. Math. Sci. \textbf{24} (2017), 100--109.





\bibitem{Kas82} Atsushi~Kasue, \emph{A Laplacian comparison theorem and function theoretic properties of a complete Riemannian manifold}, Japan. J. Math. (N.S.) \textbf{8} (1982), no.~2, 309--341.

\bibitem{Kas83} Atsushi~Kasue, \emph{Ricci curvature, geodesics and some geometric properties of Riemannian manifolds with boundary}, J. Math. Soc. Japan \textbf{35} (1983), no.~1, 117--131.

\bibitem{Kro92} Pawel~Kr\"{o}ger, \emph{On the spectral gap for compact manifolds}, J. Differential Geom. \textbf{36} (1992), no.~2, 315--330.

\bibitem{KKK14} N.~Kuznetsov, T.~Kulczycki, M.~Kwa\'{s}nicki, A.~Nazarov, S.~Poborchi, I.~Polterovich and B.~Siudeja, \emph{The legacy of Vladimir Andreevich Steklov}, Notices Amer. Math. Soc. \textbf{61} (2014), no.~1, 9--22.

\bibitem{Li14} Martin~Man-chun~Li, \emph{A sharp comparison theorem for compact manifolds with mean convex boundary}, J. Geom. Anal. \textbf{24} (2014), no.~3, 1490--1496.

\bibitem{MM88} Mario~J.~Micallef and John~Douglas~Moore, \emph{Minimal two-spheres and the topology of manifolds with positive curvature on totally isotropic two-planes}, Ann. of Math. (2) \textbf{127} (1988), no. 1, 199--227.


\bibitem{MW93} Mario~J.~Micallef and McKenzie~Y.~Wang, \emph{Metrics with nonnegative isotropic curvature}, Duke Math. J. \textbf{72} (1993), no. 3,  649--672.



\bibitem{Mon13} \'{O}.~Monta\~{n}o Carre\~{n}o, \emph{The Stekloff problem for rotationally invariant metrics on the ball}, Rev. Colombiana Mat. \textbf{47} (2013), no.~2, 181--190.

\bibitem{Mon16} \'{O}.~Monta\~{n}o Carre\~{n}o, \emph{Escobar's Conjecture for the First Steklov Eigenvalue on $n$-ellipsoids}, Revista de Ciencias \textbf{20} (2016), no.~2, 55--61.

\bibitem{Nor94} Maria~Helena~Noronha, \emph{Self-duality and $4$-manifolds with nonnegative curvature on totally isotropic $2$-planes}, Michigan Math. J. \textbf{41} (1994), no.~1, 3--12.


\bibitem{Pay70} L.~E.~Payne, \emph{Some isoperimetric inequalities for harmonic functions}, SIAM J. Math. Anal. \textbf{1} (1970), 354--359.

\bibitem{PW60} L.~E.~Payne and H.~F.~Weinberger, \emph{An optimal Poincar\'{e} inequality for convex domains}, Arch. Rational Mech. Anal. \textbf{5} (1960), 286--292.


\bibitem{Pet16} Peter~Petersen, \emph{Riemannian geometry, Third edition}, Graduate Texts in Mathematics, \textbf{171}, Springer, Cham, 2016.


\bibitem{PS19} Luigi~Provenzano and Joachim~Stubbe, \emph{Weyl-type bounds for Steklov eigenvalues}, J. Spectr. Theory \textbf{9} (2019), no.~1, 349--377.

\bibitem{QX15} Guohuan~Qiu and Chao~Xia, \emph{A generalization of Reilly's formula and its applications to a new Heintze-Karcher type inequality}, Int. Math. Res. Not. IMRN 2015, no.~17, 7608--7619.


\bibitem{RS12} S.~Raulot and A.~Savo, \emph{On the first eigenvalue of the Dirichlet-to-Neumann operator on forms}, J. Funct. Anal. \textbf{262} (2012), no.~3, 889--914.

\bibitem{Rei77} Robert~C.~Reilly, \emph{Applications of the Hessian operator in a Riemannian manifold}, Indiana Univ. Math. J. \textbf{26} (1977), no.~3, 459--472.

\bibitem{Ste02} W.~Stekloff, \emph{Sur les probl\`{e}mes fondamentaux de la physique math\'{e}ématique}, Ann. Sci. \'{E}cole Norm. Sup. (3) \textbf{19} (1902), 191--259.


\bibitem{Tay96} M.~E.~Taylor, \emph{Partial differential equations. II}, Applied Mathematical Sciences, 116, Springer-Verlag, New York, 1996.

\bibitem{WX09} Qiaoling~Wang and Changyu~Xia, \emph{Sharp bounds for the first non-zero Stekloff eigenvalues}, J. Funct. Anal. \textbf{257} (2009), no.~8, 2635--2644.


\bibitem{Wu79} H.~Wu, \emph{An elementary method in the study of nonnegative curvature}, Acta Math. \textbf{142} (1979), no.~1-2, 57--78.

%\bibitem{Wu87} H.~Wu, \emph{Manifolds of partially positive curvature}, Indiana Univ. Math. J. \textbf{36} (1987), no.~3, 525--548.

%\bibitem{Xia97} Changyu~Xia, \emph{Rigidity of compact manifolds with boundary and nonnegative Ricci curvature}, Proc. Amer. Math. Soc. \textbf{125} (1997), no.~6, 1801--1806.

\bibitem{Xio18} Changwei~Xiong, \emph{Comparison of Steklov eigenvalues on a domain and Laplacian eigenvalues on its boundary in Riemannian manifolds}, J. Funct. Anal. \textbf{275} (2018), no.~12, 3245--3258.

\bibitem{Xio19} Changwei~Xiong, \emph{On the spectra of three Steklov eigenvalue problems on warped product manifolds}, J. Geom. Anal. \textbf{32} (2022), no.~5, Paper No. 153, 35 pp.

\bibitem{YY17} Liangwei~Yang and Chengjie~Yu, \emph{A higher dimensional generalization of Hersch--Payne--Schiffer inequality for Steklov eigenvalues}, J. Funct. Anal. \textbf{272} (2017), no.~10, 4122--4130.

\bibitem{ZY84} Jia~Qing~Zhong and Hong~Cang~Yang, \emph{On the estimate of the first eigenvalue of a compact Riemannian manifold}, Sci. Sinica Ser. A \textbf{27} (1984), no.~12, 1265--1273.

\end{thebibliography}

\end{document}